\definecolor{tableShade}{gray}{0.9}
\newcolumntype{Y}{>{\centering\arraybackslash}X}
\newcolumntype{Z}{>{\raggedleft\arraybackslash}X}
\newcolumntype{S}{>{\hsize=.2\hsize}X}
\newtheorem{theorem}{Theorem}[section]
\newtheorem{lemma}[theorem]{Lemma}
\newtheorem{proposition}[theorem]{Proposition}
\numberwithin{equation}{section}
\titleformat*{\section}{\normalsize\bfseries}
\titleformat*{\subsection}{\normalsize\bfseries}
\titleformat*{\subsubsection}{\normalsize\bfseries}
\titleformat*{\paragraph}{\normalsize\bfseries}
\titleformat*{\subparagraph}{\normalsize\bfseries}
\titlespacing\section{0pt}{12pt plus 4pt minus 2pt}{0pt plus 2pt minus 2pt}
\titlespacing\subsection{0pt}{12pt plus 4pt minus 2pt}{0pt plus 2pt minus 2pt}
\titlespacing\subsubsection{0pt}{12pt plus 4pt minus 2pt}{0pt plus 2pt minus 2pt}
\title{\vspace{-0.5in}N-BODY APPROACH TO THE TRAVELING SALESMAN PROBLEM (TSP)}
\author{Johnny Seay$^1$, Edwin Gonzalez, Stephen Lowe, \\Dr. Jesse Crawford, Dr. Byrant Wyatt}
\date{}
\begin{document}

\begingroup
    \centering
    \centerline{N-BODY APPROACH TO THE TRAVELING SALESMAN PROBLEM (TSP)}
    \bigskip
    Johnny Seay$^1$, Edwin Gonzalez$^2$, Stephen Lowe$^3$,\\
    \centerline{Dr. Jesse Crawford$^4$, Dr. Byrant Wyatt$^5$}
    Department of Mathematics, Tarleton State University\\
    Box T-0470, Stephenville,TX 76402\\
    $^1$johnny.seay@go.tarleton.edu\\
    $^2$eegonzalez@atkore.com\\
    $^3$stephen.lowe@go.tarleton.edu\\
    $^4$jcrawford@tarleton.edu\\
    $^5$wyatt@tarleton.edu\\
\endgroup
\bigskip
\bigskip

\centerline{\textbf{Abstract}}
\begin{adjustwidth}{2.5em}{2.5em}
In the Traveling Salesman Problem (TSP), a list of cities and the distances between them are given.  The goal is to find the shortest possible route that visits each city exactly once and returns to the original city. The TSP has a wide range of applications in many different industries including, but not limited to, optimizing mail and shipping routes, guiding industrial machines, mapping genomes, and improving autonomous vehicles. For centuries, traveling salesmen, politicians, and circuit preachers have tackled their own versions of the problem. Within the last century, the TSP has become one of the most important problems in the fields of mathematics and computer science. The time to find an exact solution is often impractically long, which has led to the development of numerous approximation techniques, ranging from linear programming methods to nature-inspired models. Here, we present a novel N-body approach to the TSP.
\end{adjustwidth}

\section{Introduction}
\subsection{Background}
Before the invention of railroads or automobiles, traveling was a demanding and time consuming part of life. What would take us hours or days would take them weeks, months, or even years. Because of this, some professions (such as traveling salesmen, politicians, and circuit preachers) greatly benefited from carefully planned routes \cite{cook}. However, when narrowing the scope to just mathematical history, touring problems have been studied since the mid-1700s, when Leonhard Euler presented his famous Seven Bridges of Königsberg problem to the St. Petersburg Academy \cite{euler}. Despite this, it took roughly 200 years before the Traveling Salesman Problem received its first mathematical consideration when Merrill Flood was looking to solve a school bus routing problem \cite{lawler}. Since then, the TSP has become a popular problem in mathematics and computer science. It is one of the most intensively studied problems in optimization and is often used as a benchmark for optimization methods.

\subsection{Approximation Algorithms}
Running times for exact algorithms have a lower bound of $O(n^22^n)$ and an upper bound of $O(n!)$ \cite{woeginger}. For many real-world applications of the Traveling Salesman Problem, these running times are impractical. As a consequence, it may be more beneficial to work with a good approximation than to spend the time finding the exact solution. Because of this, approximation methods and algorithms are often chosen over exact algorithms. Some of these include the nearest-neighbor algorithm, the convex-hull-and-line algorithm, the Christofides algorithm, and Ant-Colony optimization. The nearest-neighbor simply has the traveling salesman pick the closest unvisited city to move to next. The convex-hull-and-line algorithm considers the fact that ``in the euclidean plane the minimal (or optimal) tour does not intersect itself'' in order to find solutions \cite{deineko}. The Christofides algorithm combines a minimum spanning tree and a minimum-weight perfect matching to produce approximate solutions \cite{goodrich}. A nature-inspired method, the Ant Colony optimization models the observation that ants prefer to follow trails containing pheromones deposited by other ants \cite{dorigo}. Of course, there are many other approaches and algorithms inspired from all facets of life, nature, and mathematics. 

\subsection{N-Body Simulation}
Before we delve into how we use N-body simulations in our approach to the Traveling Salesman Problem, it may be useful to describe what an N-body simulation is. In essence, an N-body simulation is a dynamical system of particles. These particles interact with each other through forces (e.g. gravitational or spring forces). The particles may also be influenced by additional external forces. Typically, N-body simulations are used to simulate physical processes with the scale of such phenomena ranging from intramolecular dynamics to galaxy formations \cite{greenspan}. However, we will be using N-body simulations to solve a combinatorial optimization problem. In simpler terms, and to highlight the novelty of our approach, we are using physics to find solutions to an abstract mathematical problem.

\section{Methodology}
\subsection{General/Simple}

With our N-body approach, each city is treated as a particle. The particles interact with each other through an attractive-repulsive force; for this we use a Lennard-Jones type force. To briefly describe this interaction, consider a pair of particles and the initial distance between them. This initial distance will be referred to as the natural distance. If the two particles are further apart than their natural distance, they will be attracted to each other. If the particles are closer together than their natural distance, they will be repulsed by each other. To penalize moves between distant cities, the magnitude of the repulsive force is much greater than that of the attractive force. For a more detailed explanation and analysis of the Lennard-Jones type force functions, see Appendix B.

\bigskip

We define the origin of the system to be the geometric center (or center of mass) of the particles. To keep the system contained, the particles will be surrounded by a minimum bounding circle, centered at the origin (see Figure \ref{fig:generalA}). This bounding circle acts as a wall, pushing particles inward. We refer to this bounding circle as the outer wall. Additionally, a circle with an initial radius of zero will be placed at the origin (see Figure \ref{fig:generalB}). This inner circle works as a wall pushing particles outward and will be referred to as the inner wall.

\bigskip

Over time, this inner wall will grow, increasing its radius until it reaches the outer wall. As the inner wall grows, it will push against the particles it comes into contact with. In turn, those particles will push or pull on the other particles. This process is visualized in Figures \ref{fig:generalC}-\ref{fig:generalF}. By the time the inner wall has reached the outer wall, all of the particles will have been forced into a ring, trapped between the inner and outer walls (see Figure \ref{fig:generalG}).  We observe the order in which the particles fall on this ring to obtain a path in the initial city configuration (see Figure \ref{fig:generalH}). The outer and inner circles being treated as walls acting on the particles allow us to squeeze this two-dimensional system into a one-dimensional path. The motivation behind this approach is that the system will try to minimize its energy as the particles are being forced into a ring. The idea is that a connection between the minimal energy state of the ring and an optimal path exists.
\begin{figure}[htbp!]
\centering
\begin{subfigure}{.22\linewidth}
    \centering
    \includegraphics[scale=0.5]{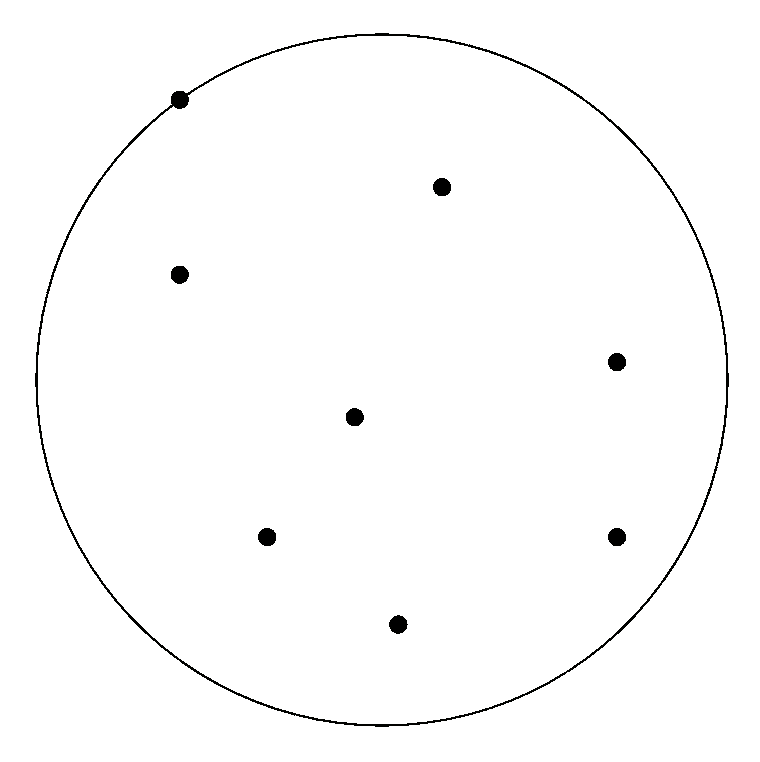}
    \caption{}\label{fig:generalA}
\end{subfigure}
    \hfill
\begin{subfigure}{.22\linewidth}
    \centering
    \includegraphics[scale=0.5]{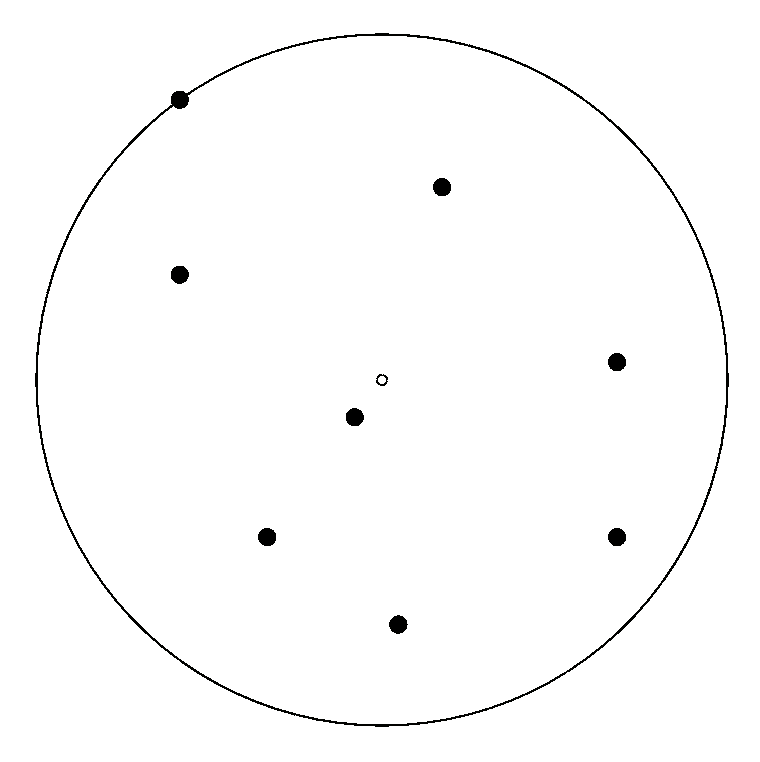}
    \caption{}\label{fig:generalB}
\end{subfigure}
   \hfill
\begin{subfigure}{.22\linewidth}
    \centering
    \includegraphics[scale=0.5]{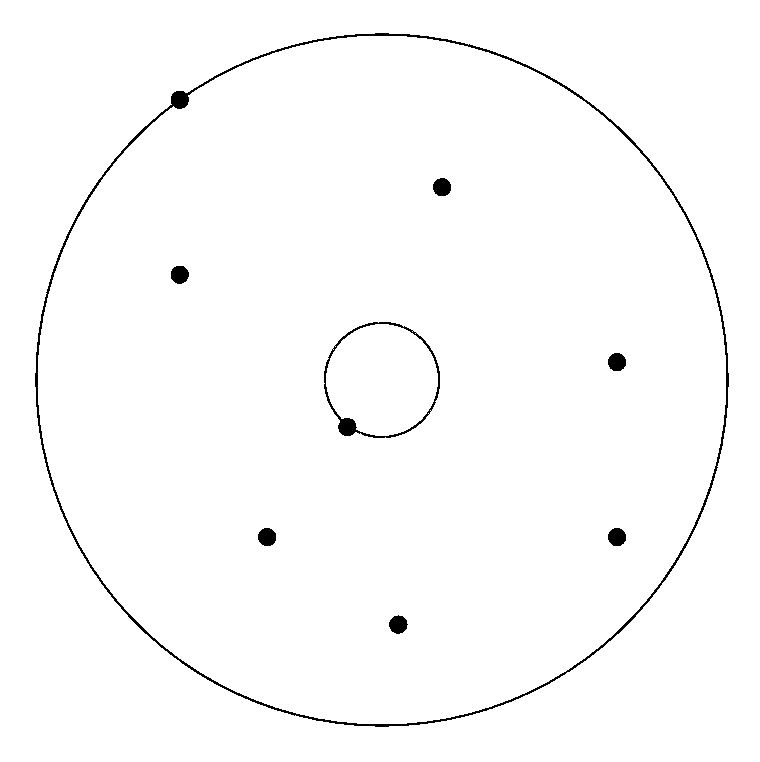}
    \caption{}\label{fig:generalC}
\end{subfigure}
    \hfill
\begin{subfigure}{.22\linewidth}
    \centering
    \includegraphics[scale=0.5]{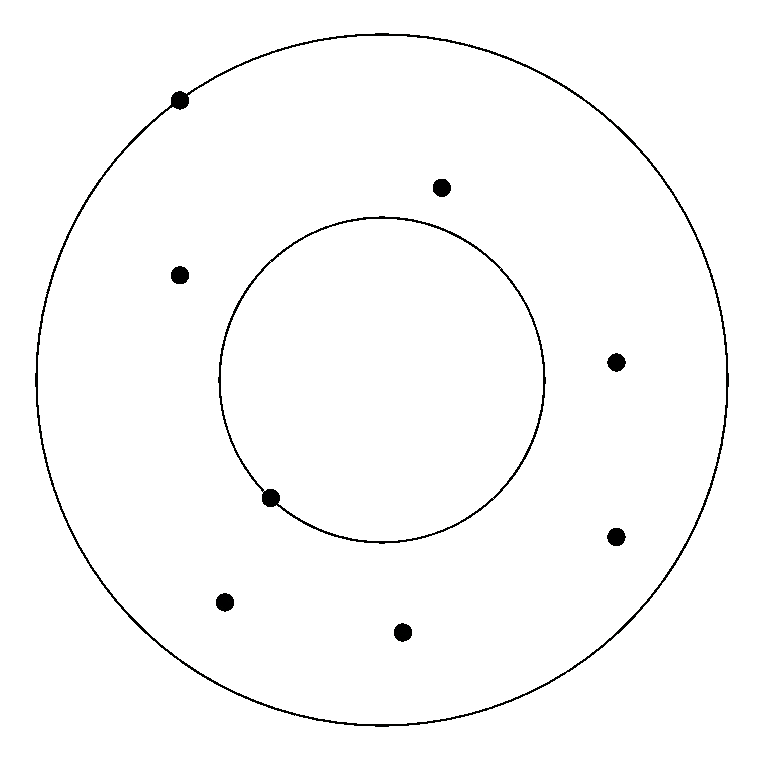}
    \caption{}\label{fig:generalD}
\end{subfigure}
\bigskip
\begin{subfigure}{.22\linewidth}
    \centering
    \includegraphics[scale=0.5]{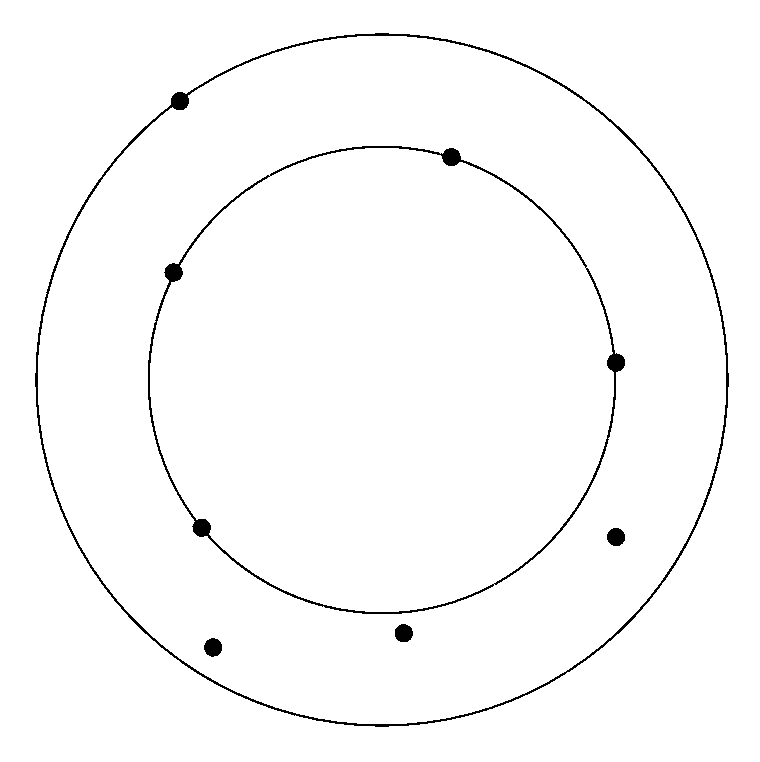}
    \caption{}\label{fig:generalE}
\end{subfigure}
    \hfill
\begin{subfigure}{.22\linewidth}
    \centering
    \includegraphics[scale=0.5]{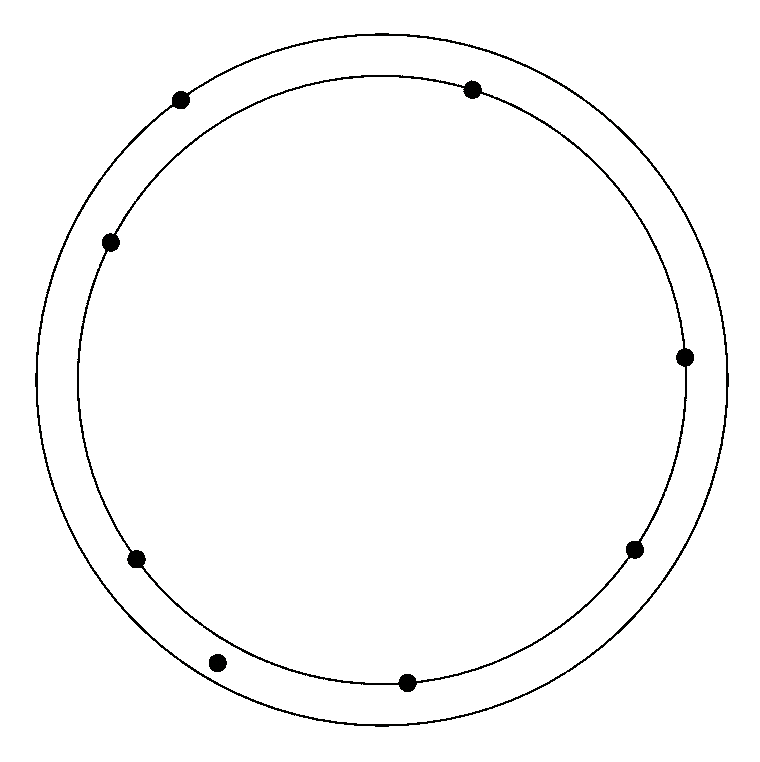}
    \caption{}\label{fig:generalF}
\end{subfigure}
   \hfill
\begin{subfigure}{.22\linewidth}
    \centering
    \includegraphics[scale=0.5]{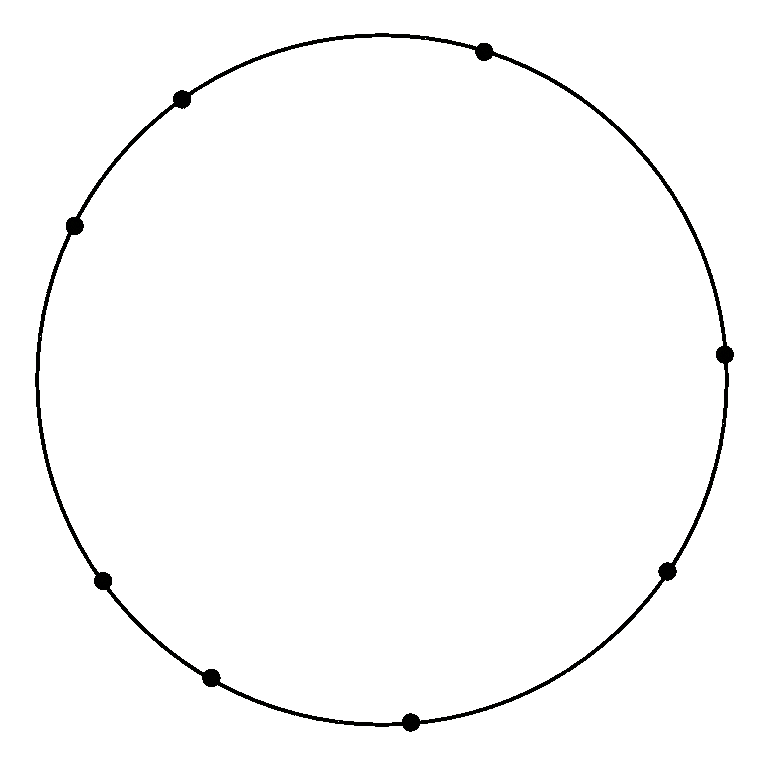}
    \caption{}\label{fig:generalG}
\end{subfigure}
    \hfill
\begin{subfigure}{.22\linewidth}
    \centering
    \includegraphics[scale=0.5]{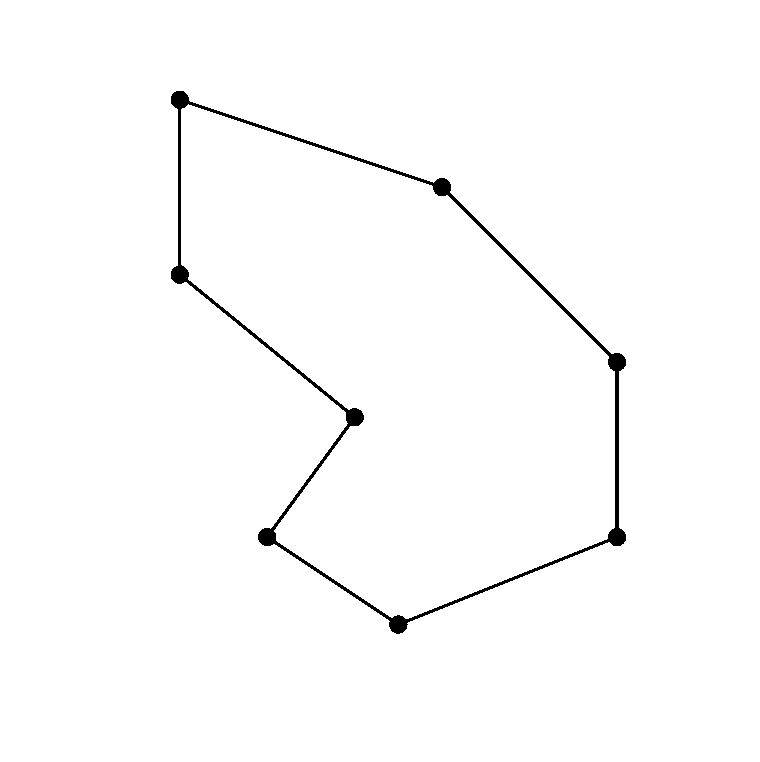}
    \caption{}\label{fig:generalH}
\end{subfigure} 

\RawCaption{\caption{Phases of the general N-body simulation.}
\label{fig:general}}
\end{figure}

\subsection{Preliminary Results}

In order to determine if this approach was feasible, we first ran our N-body simulation on relatively small sets of independent randomly generated instances and compared our results with the exact cost and the nearest-neighbor algorithm. These results are shown in Table \ref{tab:results-random}. The data for these results can be found online by following the link provided in the Supplemental Material section. To find the exact costs for these randomly generated instances we used the brute-force approach; hence the cutoff at 12 cities. The percent error presented in the results is a measurement of how close the approximated cost is to the exact cost, as a percentage of the exact cost (shown below).

$$\text{Percent Error} = \frac{\text{Approximated Cost} - \text{Exact Cost}}{\text{Exact Cost}} \times 100$$

Looking at the results presented in Table \ref{tab:results-random}, we saw potential in this approach. Moving on to slightly bigger instances, the N-body results for a 4x4 grid instance are presented in Figure \ref{fig:simple-grid-results}. Finding an optimal solution for full grid instances is relatively simple and can be done by hand, even for a large number of cities. For these instances, an optimal solution can be found by minimizing the number of diagonal edges within the path. Next, we tried our approach on the \textit{att48} instance. This instance consists of the capitals of the 48 contiguous states. The optimal cost for this instance is known. These results are presented in Figure \ref{fig:simple-att48-results}. The average runtime of the simulation for these specific instances are included in order to provide the reader with some insight into the time it takes to find a solution with this approach. More results for these two instances will be provided throughout this text. Results for additional instances are provided in Appendix A.

\begin{table}[H]
\centering
\caption{Results from 100 runs of independent randomly generated datasets.}
\label{tab:results-random}
\renewcommand{\tabularxcolumn }[1]{ >{\arraybackslash}b{#1}}
\rowcolors{1}{tableShade}{white}
\begin{tabularx}{0.95\textwidth}{@{}YYY@{}}
\rowcolor{white}
\textbf{Number of Cities} & \textbf{Average N-body Percent Error} & \textbf{Average Nearest Neighbor Percent Error}\\ \midrule
8 & 1.2311\% & 8.0474\%\\
9 & 2.5020\% & 9.7363\%\\
10 & 2.1861\% & 9.9390\%\\
11 & 2.4125\% & 11.2259\%\\
12 & 4.0270\% & 13.4567\%\\
\bottomrule
\end{tabularx}
\end{table}

\begin{figure}[H]
    \centering
    \begin{minipage}{0.25\textwidth}
        \centering
        \includegraphics[scale=0.75]{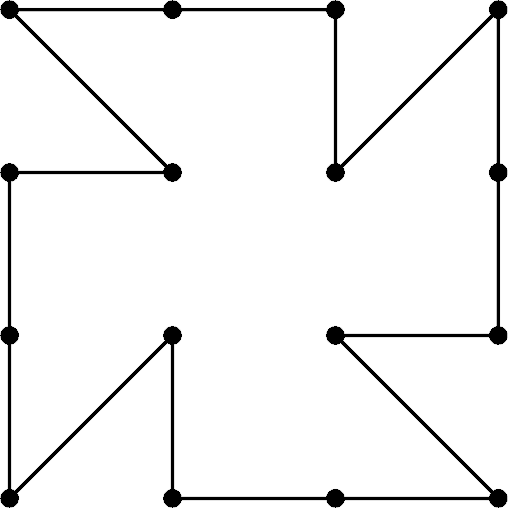}
    \end{minipage}
    \begin{minipage}{0.5\textwidth}
        \begin{footnotesize}
        \centering
        \renewcommand{\tabularxcolumn}[1]{ >{\arraybackslash}b{#1}}
        \begin{tabularx}{1.1\textwidth}{Z p{2cm}}
        Optimal Path Cost: & 16.000 \\
        Simple N-body Path Cost: & 17.657 \\
        \textbf{Percent Error:} & \textbf{10.355\%} \\ \cmidrule(lr){1-2}
        Average Runtime: & 6.67 s
        \end{tabularx}
        \end{footnotesize}
    \end{minipage}
    \caption{Simple N-body result for 4x4 grid instance.}
    \label{fig:simple-grid-results}
\end{figure}

\begin{figure}[H]
    \centering
    \begin{minipage}{0.25\textwidth}
        \centering
        \includegraphics[scale=0.75]{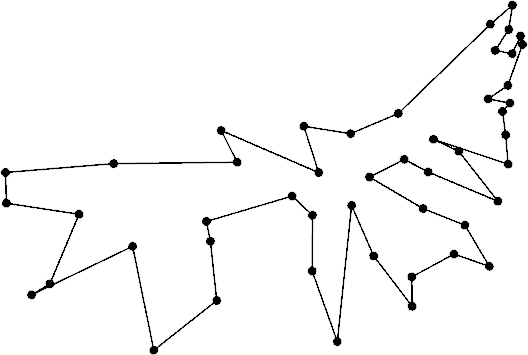}
    \end{minipage}
    \begin{minipage}{0.5\textwidth}
        \begin{footnotesize}
        \centering
        \renewcommand{\tabularxcolumn}[1]{ >{\arraybackslash}b{#1}}
        \begin{tabularx}{1.1\textwidth}{Z p{2cm}}
        Optimal Path Cost: & 33523.708 \\
        Simple N-body Path Cost: & 38862.859 \\
        \textbf{Percent Error:} & \textbf{15.927\%} \\ \cmidrule(lr){1-2}
        Average Runtime: & 6.69 s
        \end{tabularx}
        \end{footnotesize}
    \end{minipage}
    \caption{Simple N-body result for \textit{att48} instance.}
    \label{fig:simple-att48-results}
\end{figure}

\subsection{Variations/Features}
As the number of cities increases, the system may become dense. Consequently, the potential energy within the system increases. As a result, the system has a propensity to become increasingly more erratic as the area between the two walls approaches zero. This erratic behavior results in undesirable solutions. Another complication arises when dealing with non-uniform instances of the TSP. With non-uniform instances, denser groups of particles tend to move as a single unit. As these clumps of particles are squished between the walls, they exhibit similar erratic behavior to that mentioned previously but on a smaller scale. We have found that breaking these clumps up can yields better results. Addressing these erratic phenomena is crucial in optimizing our N-body approach.

\subsubsection{Pressure/Global Density}
As mentioned earlier, erratic behaviour appears when working with dense instances for two reasons. The particles do not have sufficient time to readjust themselves into a desirable configuration and the total force in the system can go beyond the limits of the numerical scheme as the separation of the walls approaches zero, resulting in the particles breaking through the walls. This can be seen in Figure \ref{fig:pressure1}. To address this, we initially added sufficient perimeter to the outer wall to handle the total force generated. However, this resulted in the entire system clustering to one side of the inner wall, as seen in Figure \ref{fig:pressure2}. This also produced undesirable results. Our next step was to allow the outer wall to adjust accordingly to the state of the system by measuring the forces exerted on the outer wall. We refer to the summation of these forces divided by the perimeter of the outer wall as pressure. If the pressure falls outside of an acceptable range, the outer wall will grow or shrink accordingly. This achieves the desirable effects of giving the system more time and room to move into lower energy states while maintaining the integrity of our numerical scheme. This can be seen in Figure \ref{fig:pressure3}.

\begin{figure}[H]
\centering
\begin{subfigure}{.22\linewidth}
    \centering
    \includegraphics[scale=0.5]{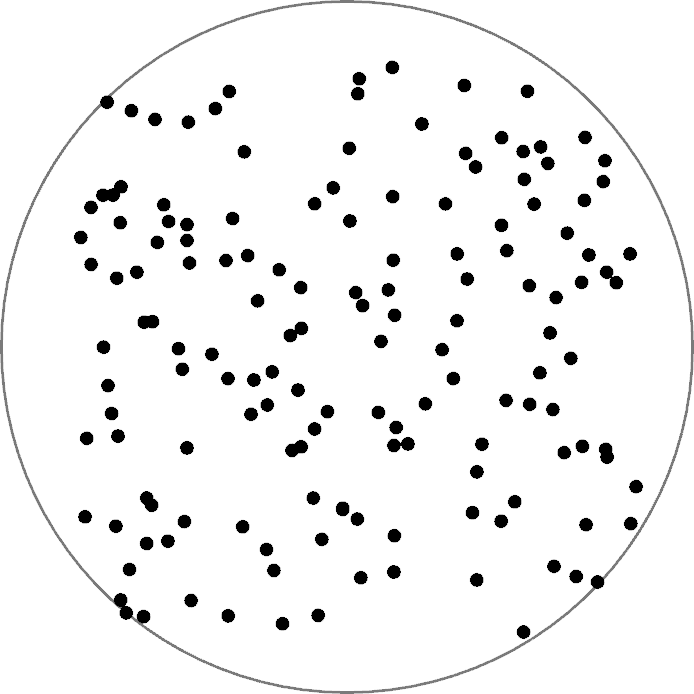}
    \caption{}\label{fig:pressureA}
\end{subfigure}
\hfill
\begin{subfigure}{.22\linewidth}
    \centering
    \includegraphics[scale=0.65]{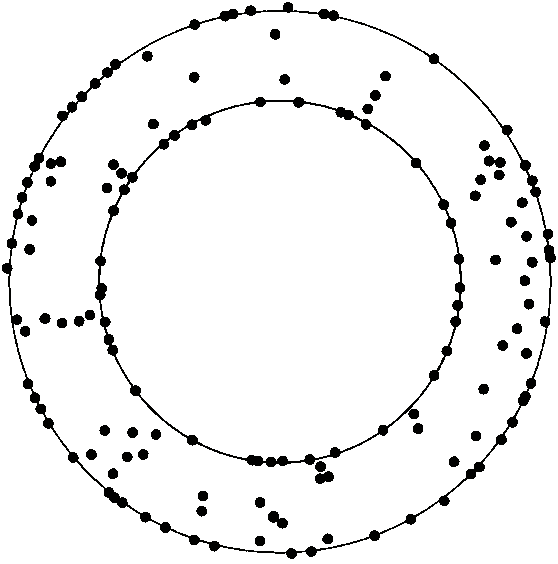}
    \caption{}\label{fig:pressureB}
\end{subfigure}
\hfill
\begin{subfigure}{.22\linewidth}
    \centering
    \includegraphics[scale=0.65]{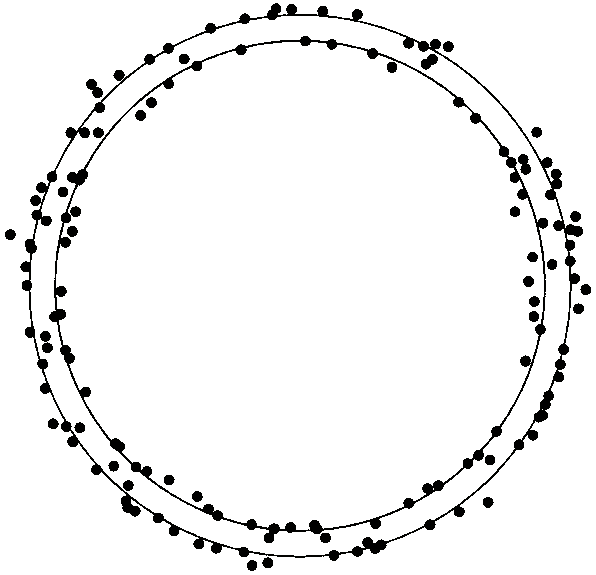}
    \caption{}\label{fig:pressureC}
\end{subfigure}
\hfill
\begin{subfigure}{.22\linewidth}
    \centering
    \includegraphics[scale=0.65]{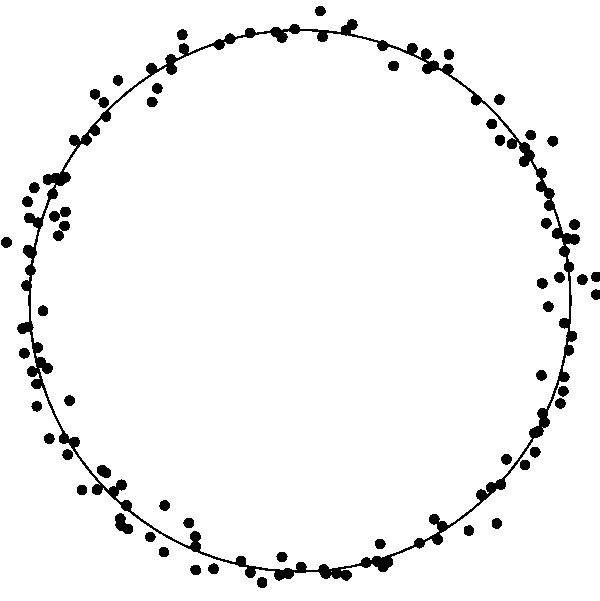}
    \caption{}\label{fig:pressureD}
\end{subfigure}
\RawCaption{\caption{Dense instance breaking through the walls.}
\label{fig:pressure1}}
\end{figure}

\begin{figure}[H]
\centering
\begin{subfigure}{.22\linewidth}
    \centering
    \includegraphics[scale=0.5]{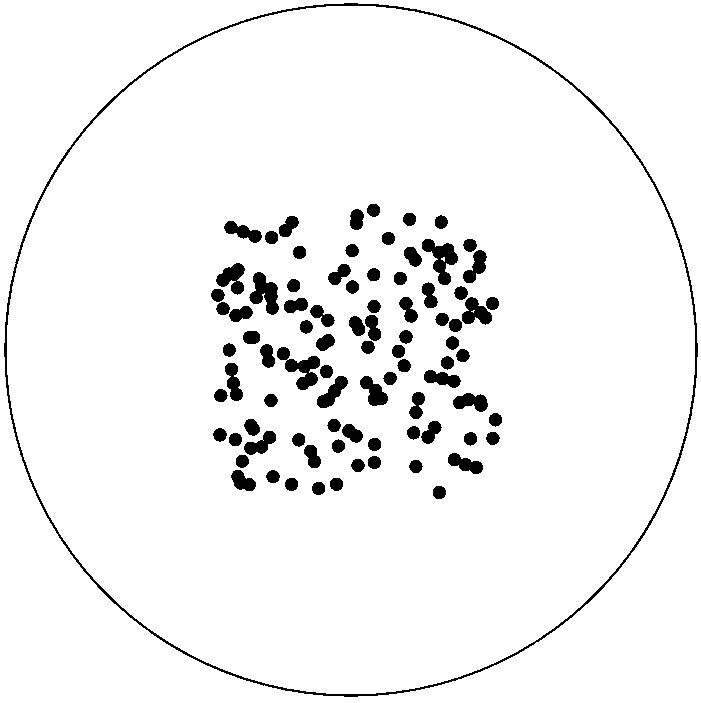}
    \caption{}\label{fig:pressureE}
\end{subfigure}
\hfill
\begin{subfigure}{.22\linewidth}
    \centering
    \includegraphics[scale=0.5]{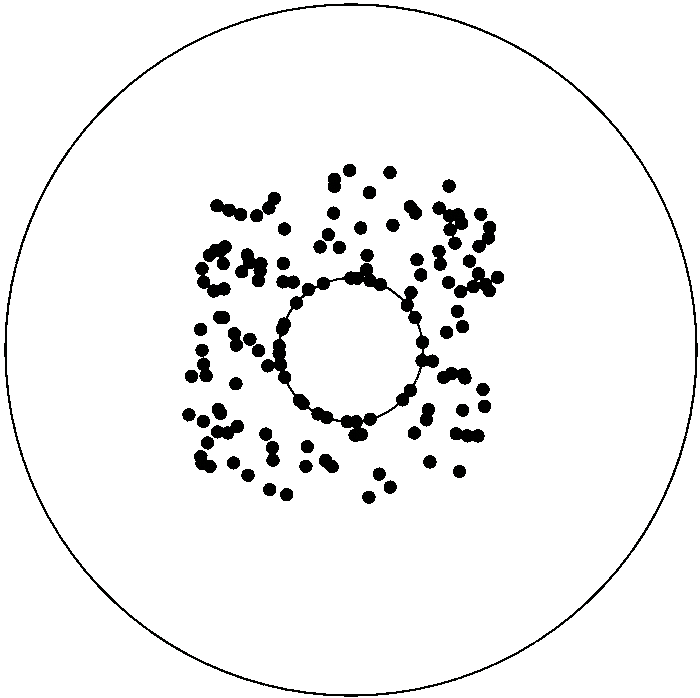}
    \caption{}\label{fig:pressureF}
\end{subfigure}
\hfill
\begin{subfigure}{.22\linewidth}
    \centering
    \includegraphics[scale=0.5]{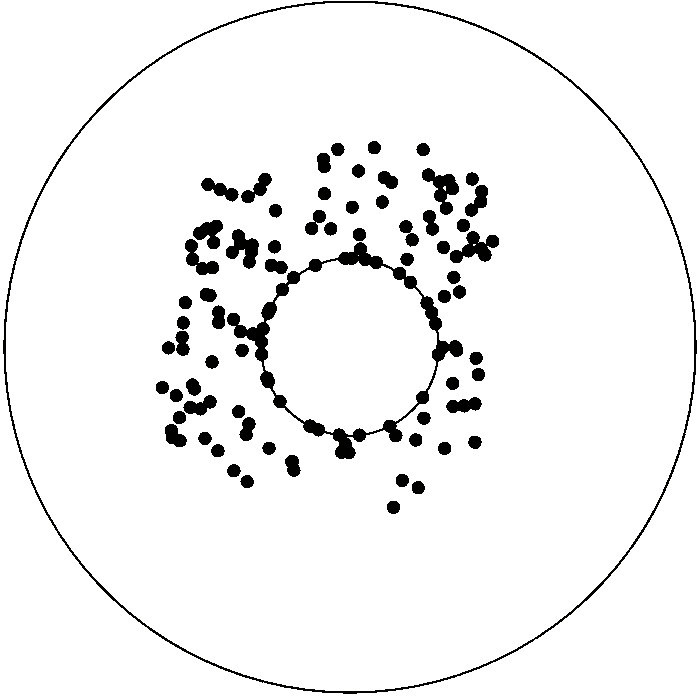}
    \caption{}\label{fig:pressureG}
\end{subfigure}
\hfill
\begin{subfigure}{.22\linewidth}
    \centering
    \includegraphics[scale=0.5]{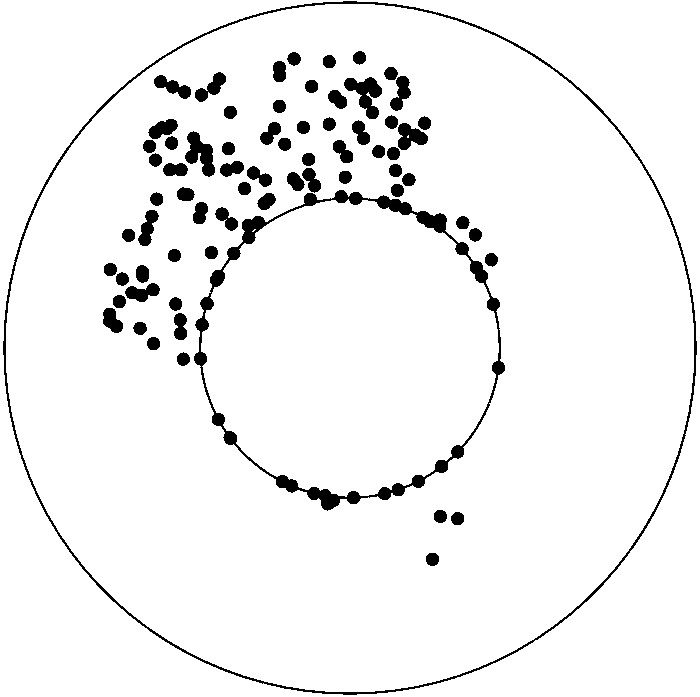}
    \caption{}\label{fig:pressureH}
\end{subfigure}
\RawCaption{\caption{Starting with a larger initial outer wall radius.}
\label{fig:pressure2}}
\end{figure}

\begin{figure}[H]
\centering
\begin{subfigure}{.22\linewidth}
    \centering
    \includegraphics[scale=0.5]{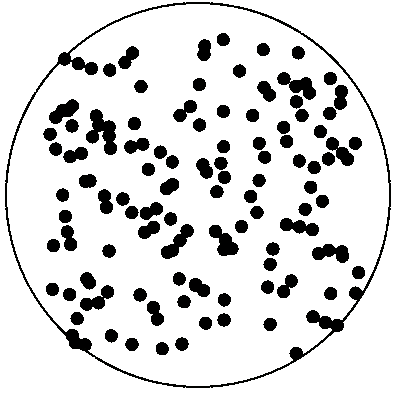}
    \caption{}\label{fig:pressureI}
\end{subfigure}
\hfill
\begin{subfigure}{.22\linewidth}
    \centering
    \includegraphics[scale=0.5]{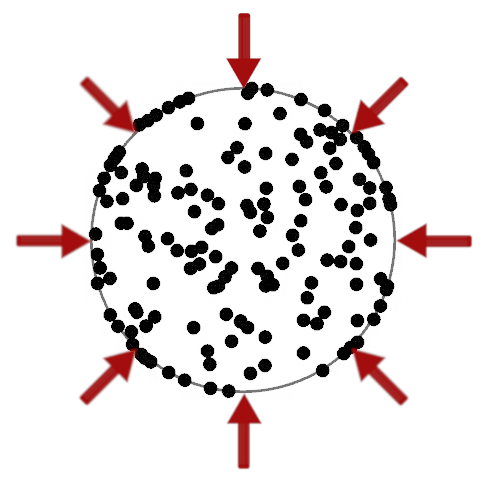}
    \caption{}\label{fig:pressureJ}
\end{subfigure}
\hfill
\begin{subfigure}{.22\linewidth}
    \centering
    \includegraphics[scale=0.5]{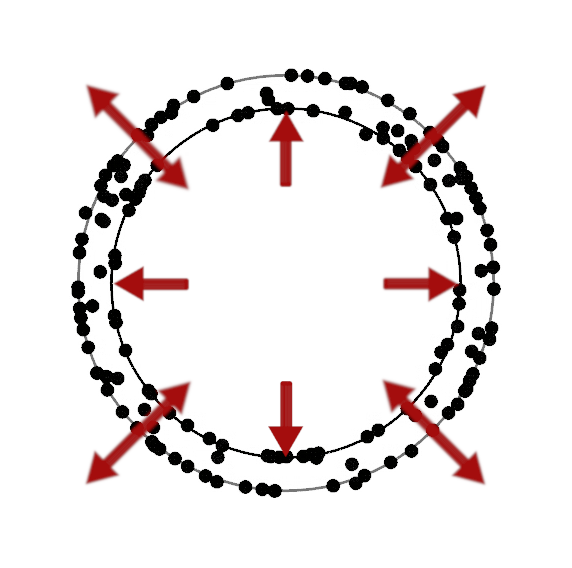}
    \caption{}\label{fig:pressureK}
\end{subfigure}
\hfill
\begin{subfigure}{.22\linewidth}
    \centering
    \includegraphics[scale=0.5]{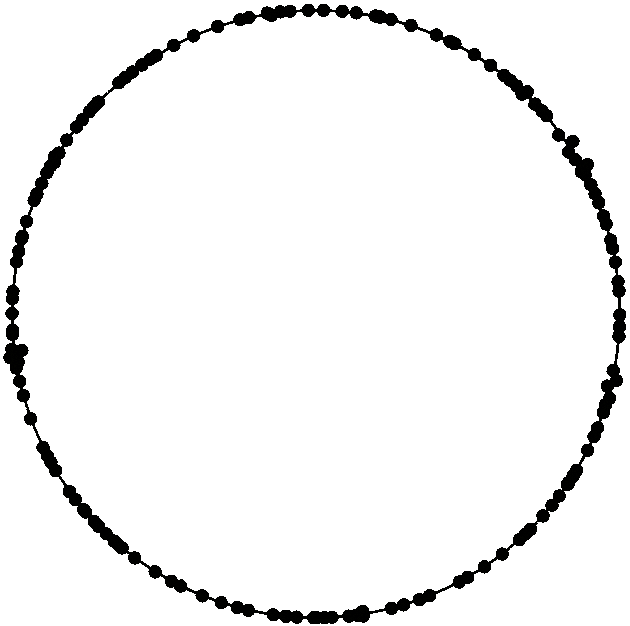}
    \caption{}\label{fig:pressureL}
\end{subfigure}

\RawCaption{\caption{Letting the outer wall adjust as needed.}
\label{fig:pressure3}}
\end{figure}

Some results of our N-body simulation with pressure implemented are shown in Figure \ref{fig:pressure-grid-results} and Figure \ref{fig:pressure-att48-results}.  For the 4x4 grid instance, this pressure variation resulted in a different path but yielded the same cost as the simple N-body. However, for the \textit{att48} instance, this pressure variation resulted in a significant improvement over the results produced by the simple N-body approach. Additional results showcasing the improvements provided by adding pressure can be found in Appendix A, Figures 16-18.

\begin{figure}[H]
    \centering
    \begin{minipage}{0.25\textwidth}
        \centering
        \includegraphics[scale=0.75]{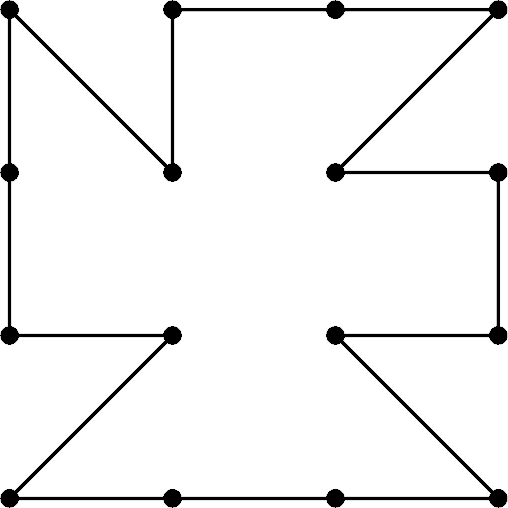}
    \end{minipage}
    \begin{minipage}{0.5\textwidth}
        \begin{footnotesize}
        \centering
        \renewcommand{\tabularxcolumn}[1]{ >{\arraybackslash}b{#1}}
        \begin{tabularx}{1.1\textwidth}{Z p{2cm}}
        Optimal Path Cost: & 16.000 \\
        Pressure N-body Path Cost: & 17.657 \\
        \textbf{Percent Error:} & \textbf{10.355\%} \\ \cmidrule(lr){1-2}
        Average Runtime: & 10.03 s
        \end{tabularx}
        \end{footnotesize}
    \end{minipage}
    \caption{Pressure N-body result for 4x4 grid instance.}
    \label{fig:pressure-grid-results}
\end{figure}

\begin{figure}[H]
    \centering
    \begin{minipage}{0.25\textwidth}
        \centering
        \includegraphics[scale=0.75]{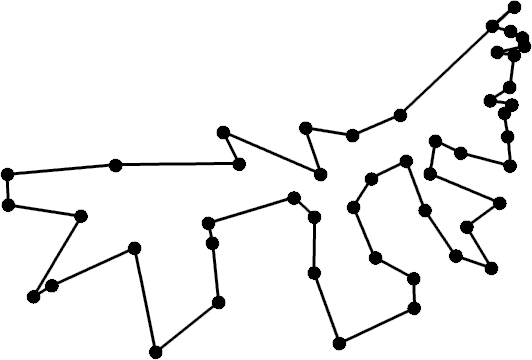}
    \end{minipage}
    \begin{minipage}{0.5\textwidth}
        \begin{footnotesize}
        \centering
        \renewcommand{\tabularxcolumn}[1]{ >{\arraybackslash}b{#1}}
        \begin{tabularx}{1.1\textwidth}{Z p{2cm}}
        Optimal Path Cost: & 33523.708 \\
        Pressure N-body Path Cost: & 36967.234 \\
        \textbf{Percent Error:} & \textbf{10.272\%} \\ \cmidrule(lr){1-2}
        Average Runtime: & 13.42 s
        \end{tabularx}
        \end{footnotesize}
    \end{minipage}
    \caption{Pressure N-body result for \textit{att48} instance.}
    \label{fig:pressure-att48-results}
\end{figure}

\subsubsection{Clumping/Local Density (``Bubble" Method)}
As stated earlier, non-uniform instances can result in localized clustering which can produce undesirable results (see Figure \ref{fig:density1}). In order to address this issue, additional circles are inserted in the denser areas of the system in an attempt to break them apart. These additional circles also act as walls pushing on the particles; we refer to these additional circles as bubbles to differentiate them from the outer and inner walls. To decide where to insert these bubbles, a rough density map is created by partitioning the space into cells and counting the number of particles in each cell. For each non-empty cell, the center of mass of the particles within that cell is calculated. For cells with density above a given value, their center of masses are used as the insertion points for the bubbles. This process is visualized in Figure \ref{fig:density2}.

\begin{figure}[H]
\centering
\begin{subfigure}{.22\linewidth}
    \centering
    \includegraphics[scale=0.5]{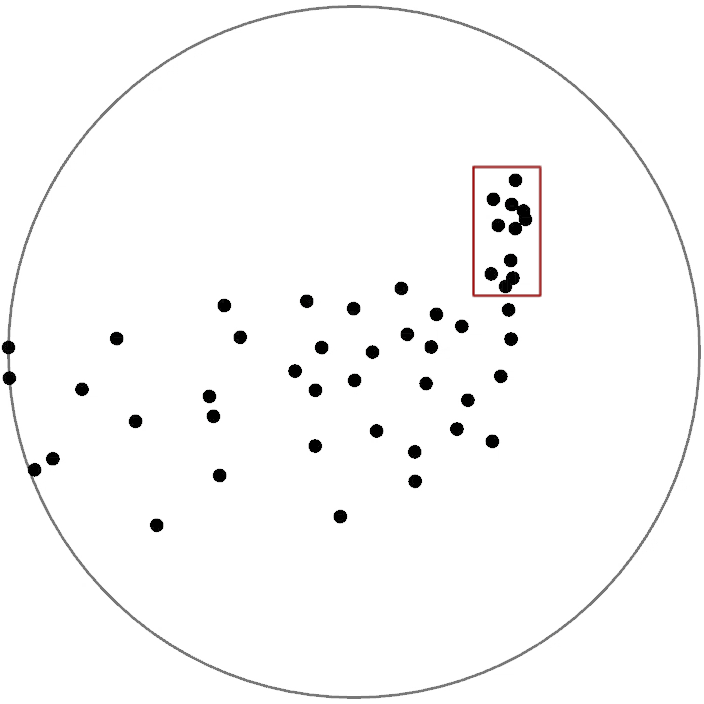}
    \caption{}\label{fig:densityA}
\end{subfigure}
\hfill
\begin{subfigure}{.22\linewidth}
    \centering
    \includegraphics[scale=0.5]{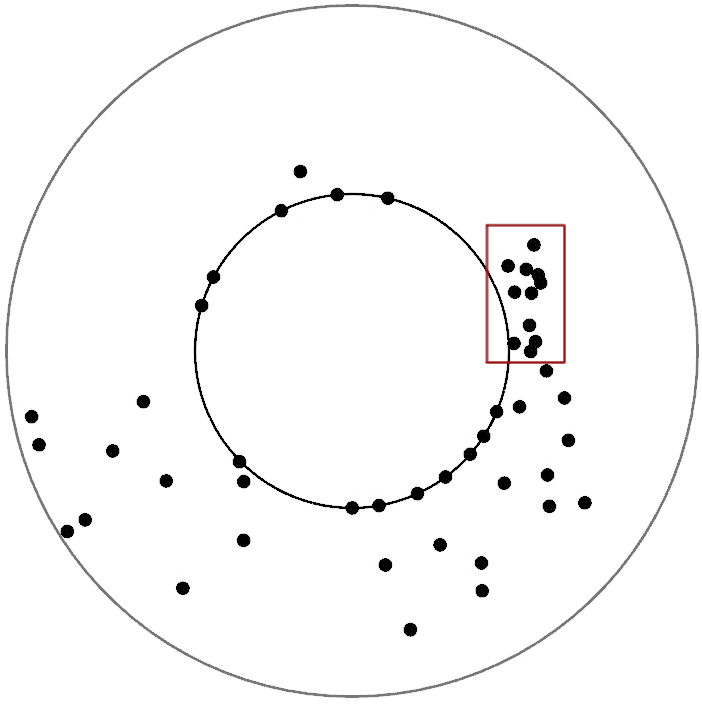}
    \caption{}\label{fig:densityB}
\end{subfigure}
\hfill
\begin{subfigure}{.22\linewidth}
    \centering
    \includegraphics[scale=0.5]{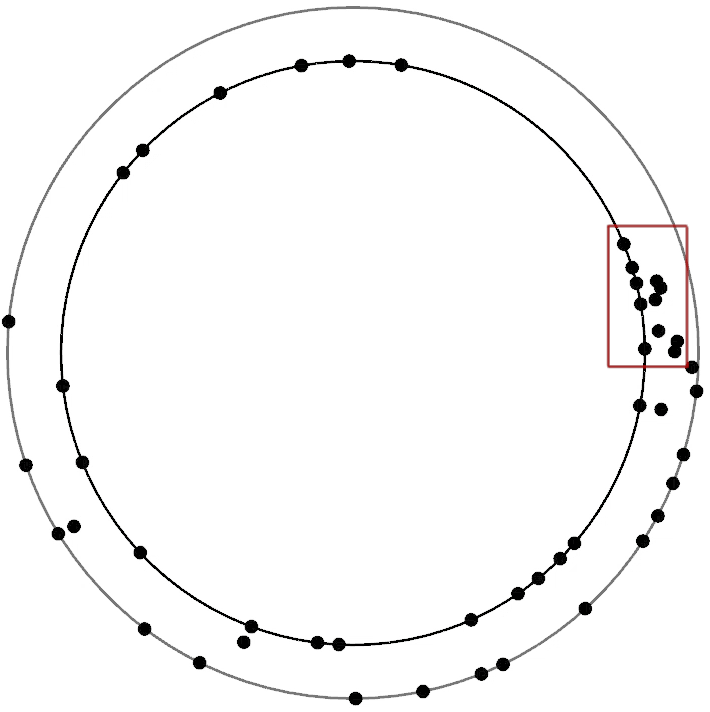}  
    \caption{}\label{fig:densityC}
\end{subfigure}
\hfill
\begin{subfigure}{.22\linewidth}
    \centering
    \includegraphics[scale=0.5]{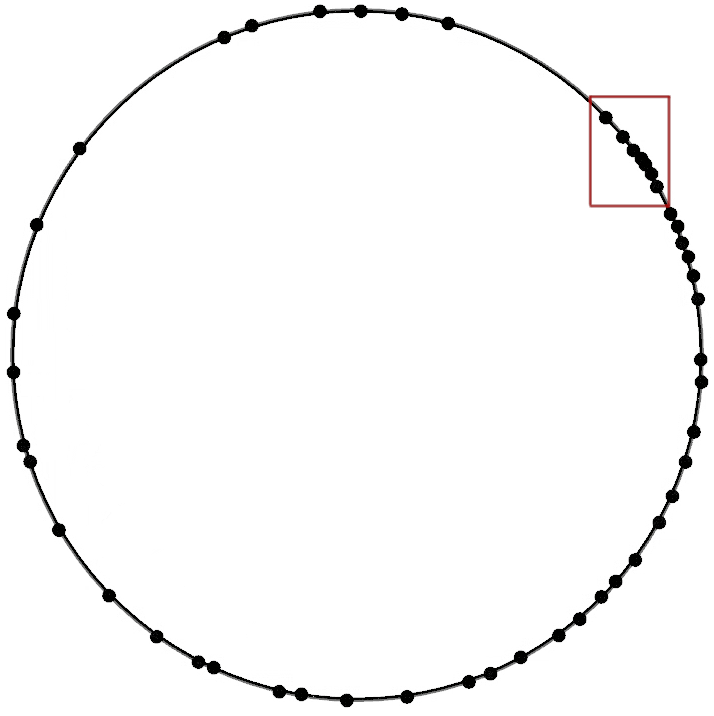}
    \caption{}\label{fig:densityD}
\end{subfigure}
\RawCaption{\caption{Local clumping within a non-uniform dataset.}
\label{fig:density1}}
\end{figure}

\bigskip

\begin{figure}[H]
\centering
\begin{subfigure}{.22\linewidth}
    \centering
    \includegraphics[scale=0.5]{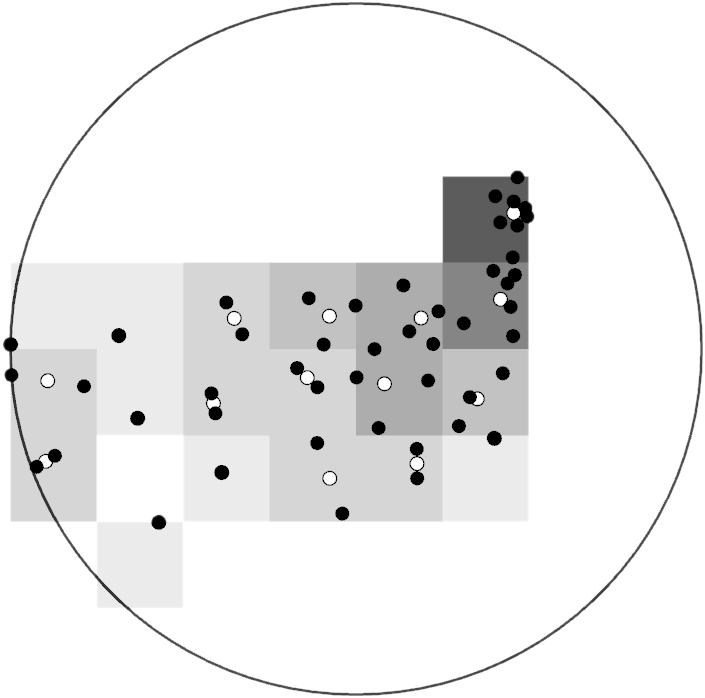}
    \caption{}\label{fig:densityE}
\end{subfigure}
\hfill
\begin{subfigure}{.22\linewidth}
    \centering
    \includegraphics[scale=0.5]{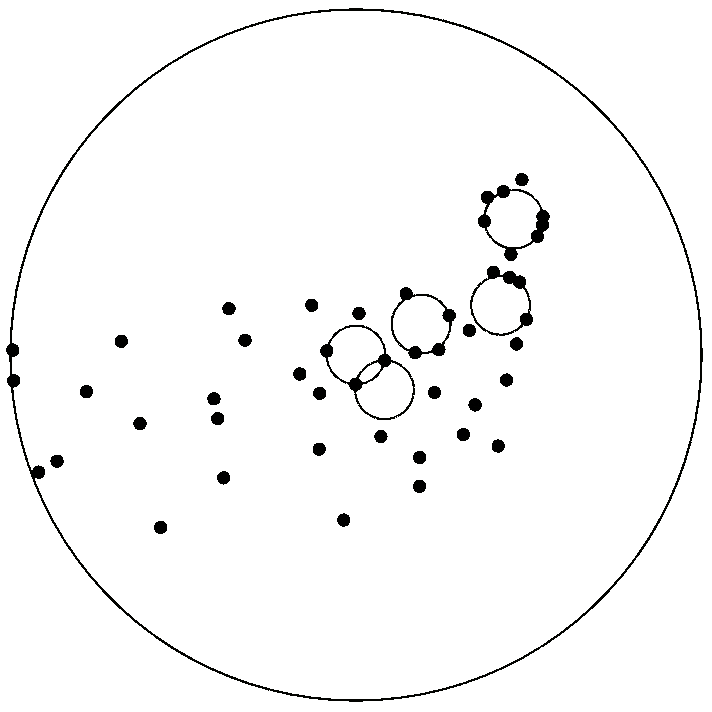}
    \caption{}\label{fig:densityF}
\end{subfigure}
\hfill
\begin{subfigure}{.22\linewidth}
    \centering
    \includegraphics[scale=0.5]{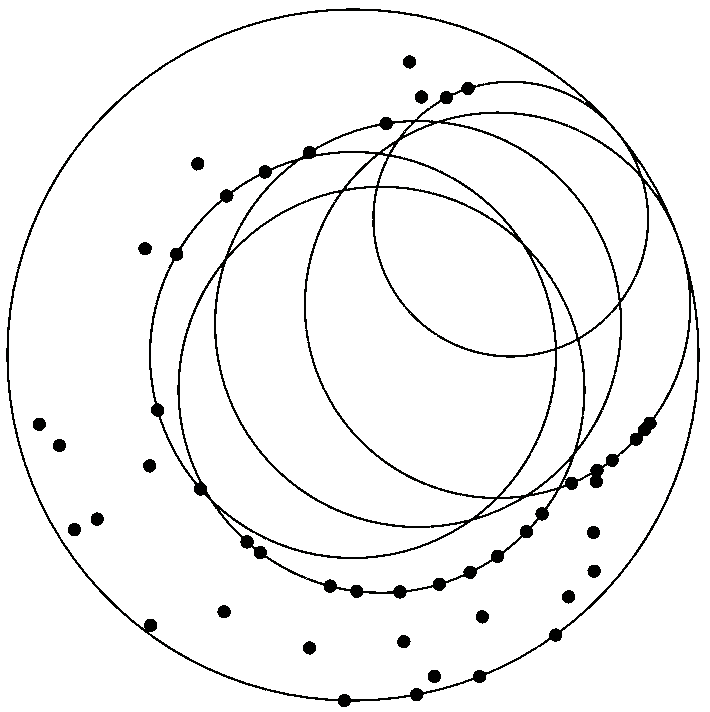}
    \caption{}\label{fig:densityG}
\end{subfigure}
\hfill
\begin{subfigure}{.22\linewidth}
    \centering
    \includegraphics[scale=0.5]{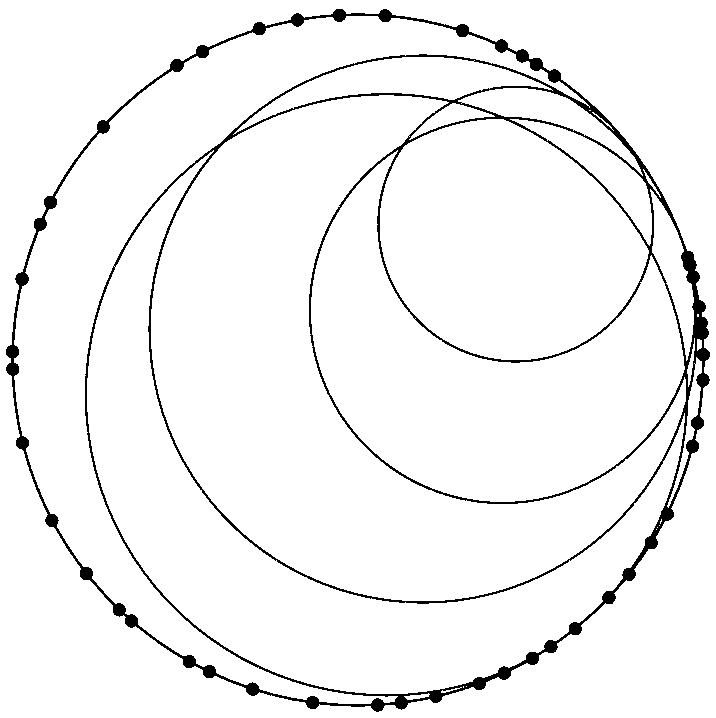}
    \caption{}\label{fig:densityH}
\end{subfigure}

\RawCaption{\caption{Using bubbles}
\label{fig:density2}}
\end{figure}

\bigskip

Some results of our N-body simulation with bubbles implemented are shown in Figure \ref{fig:density-grid-results} and Figure \ref{fig:density-att48-results}. These additional bubbles result in much better approximations compared to the simple N-body approach described earlier.

\begin{figure}[H]
    \centering
    \begin{minipage}{0.25\textwidth}
        \centering
        \includegraphics[scale=0.75]{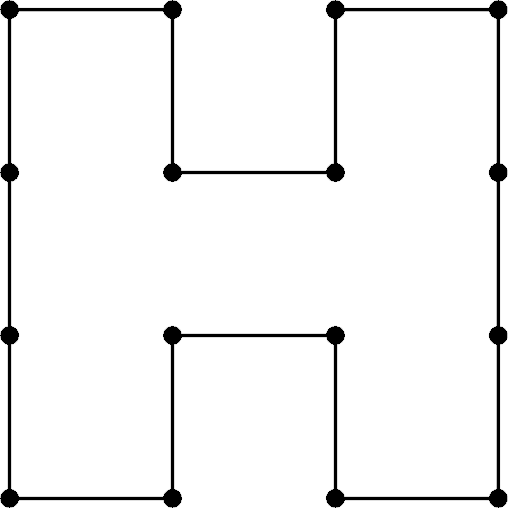}
    \end{minipage}
    \begin{minipage}{0.5\textwidth}
        \begin{footnotesize}
        \centering
        \renewcommand{\tabularxcolumn}[1]{ >{\arraybackslash}b{#1}}
        \begin{tabularx}{1.1\textwidth}{Z p{2cm}}
        Optimal Path Cost: & 16.000 \\
        Bubble N-body Path Cost: & 16.000 \\
        \textbf{Percent Error:} & \textbf{0.000\%} \\ \cmidrule(lr){1-2}
        Average Runtime: & 6.69 s
        \end{tabularx}
        \end{footnotesize}
    \end{minipage}
    \caption{Bubble N-body result for 4x4 grid instance.}
    \label{fig:density-grid-results}
\end{figure}

\begin{figure}[H]
    \centering
    \begin{minipage}{0.25\textwidth}
        \centering
        \includegraphics[scale=0.75]{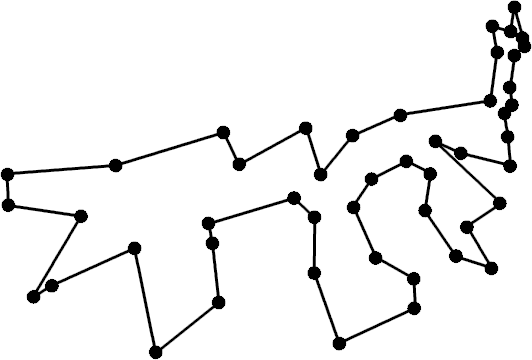}
    \end{minipage}
    \begin{minipage}{0.5\textwidth}
        \begin{footnotesize}
        \centering
        \renewcommand{\tabularxcolumn}[1]{ >{\arraybackslash}b{#1}}
        \begin{tabularx}{1.1\textwidth}{Z p{2cm}}
        Optimal Path Cost: & 33523.708 \\
        Bubble N-body Path Cost: & 35725.371 \\
        \textbf{Percent Error:} & \textbf{6.567\%} \\ \cmidrule(lr){1-2}
        Average Runtime: & 6.77 s
        \end{tabularx}
        \end{footnotesize}
    \end{minipage}
    \caption{Bubble N-body result for \textit{att48} instance.}
    \label{fig:density-att48-results}
\end{figure}

\subsection{Comparison Of The Different Method Variations}
Table \ref{tab:results-summary} summarizes the results of the different variations across different TSP instances. These results are shown here in an effort to highlight the improvements provided by the different variations of our N-body approach, as well as to showcase the potential future variations may have. From these results, we see that bubbles do not always provide an improvement, as seen with the \textit{ch150} instance. Additional results for the \textit{bayg29} and \textit{ch150} instances are provided in Appendix A. 

\begin{table}[H]
    \centering
    \caption{Summary of N-body results for different instances.}
    \label{tab:results-summary}
    \renewcommand{\tabularxcolumn }[1]{ >{\arraybackslash}b{#1}}
    \rowcolors{1}{tableShade}{white}
    \begin{tabularx}{0.95\textwidth}{@{}YYYYY@{}}
        \rowcolor{white}
        \textbf{Instance} & \textbf{Number of Cities} & \textbf{Simple N-body Percent Error} & \textbf{Pressure N-body Percent Error} & \textbf{Bubble N-body Percent Error}\\ \midrule
        4x4 Grid & 16 & 10.355\% & 10.355\% & \textbf{0.000\%}\\
        att48 & 48 & 15.927\% & 10.272\% & \textbf{6.576\%}\\
        bayg29 & 29 & 15.413\% & 8.227\% & \textbf{3.445\%}\\
        ch150 & 150 & 32.193\% & \textbf{15.636\%} & 29.118\%\\
        \bottomrule
        \hfill
    \end{tabularx}
    \hfill
\end{table}

\section{Conclusion}
Considering the early results presented in this text and the direction of recent improvements, we believe this is a promising approach. So far, we are able to rapidly obtain good solutions. Additionally, this approach showcases that perhaps solutions or better approximations can be found by approaching the problem in a unique and novel way. By viewing abstract problems with a physical perspective, one can draw on the vocabulary and intuition built in the physical world. This will hopefully allow us to describe the theoretical ideas and characteristics of the problem with physical terms. Furthermore, our work may be the foundation to approaching other problems related to the Traveling Salesman Problem, such as the popular open millennium problem of whether or not $P=NP$.  

\section{Future Work}
The motivation behind our future work is the desire to obtain accurate results on larger datasets in less time. To do this, there are many directions we are considering. For instance, we find it necessary to perform some type of parameter optimization. These parameters include the parameters for our Lennard-Jones type force functions, wall strength, the rate of change of wall radius, the simulation time step, the lower and upper bounds of the pressure range, local density cutoffs, and the number of bubbles to insert. Understanding the relationships between these parameters and the impact they have on the final result is important in trying to improve results. Another facet to consider is our implementation of the N-body simulation. Currently we are utilizing a brute-force approach in the N-body simulation. However, there are other N-body implementations that may allow us to reduce the runtime of our simulations. We also need to investigate other force functions. Perhaps there exists other force functions that are better suited for this approach than the Lennard-Jones type force functions we are currently using. In a more interesting direction, we would like to explore moving the simulation into higher-dimensional spaces. At the moment, our simulation only exists in a two-dimensional space. Perhaps letting the simulation run in higher-dimensional spaces will produce better results and allow us to tackle multi-weighted graphs. Current thoughts revolve around the idea of using a torus to facilitate the process of taking an $n$-dimensional system and constricting it to a one-dimensional path. Finally, we would like to extrapolate and investigate the relationships between the physical parameters and the abstract ideas of the problem. 

\printbibliography
\section*{Acknowledgements}
Tarleton State University's College of Science and Technology for research and travel funds. Tarleton State University's Mathematics Department for valuable suggestions and feedback. Tarleton State University's High Performance Computing Lab for space and computational resources. Nvidia for providing us with graphical processing units (GPUs) used for computational acceleration.

\section*{Supplemental Material}
For further reading, data, results, and videos visit \url{https://seayjohnny.github.io/NBodyTSP}

\newpage
\appendix
\section{More Results}
\begin{figure}[H]
    \centering
    \begin{minipage}{0.25\textwidth}
        \centering
        \includegraphics[scale=0.65]{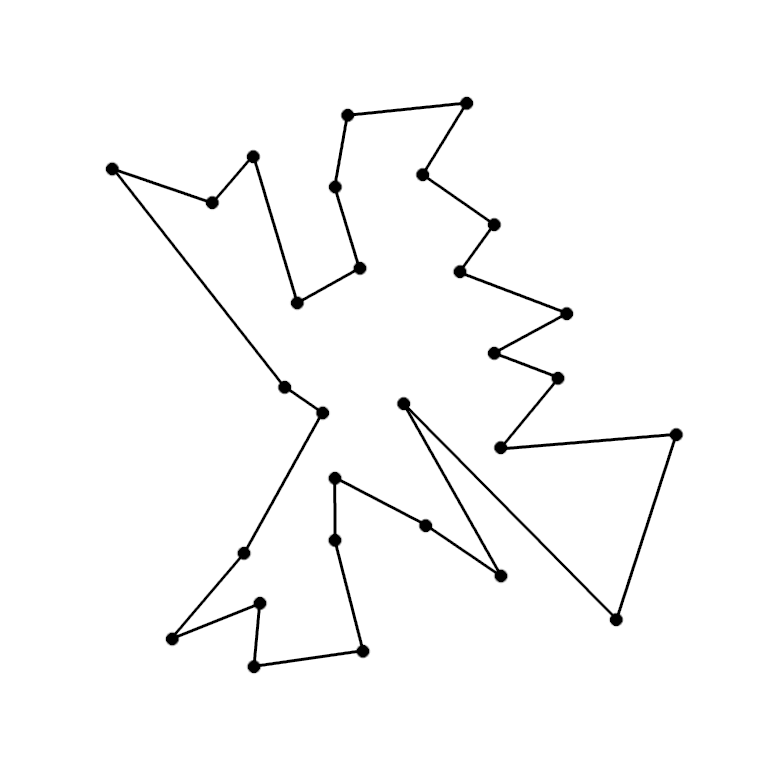}
        \caption{Simple N-body result for \textit{bay29} instance.}
    \end{minipage}
    \hspace{5mm}
    \begin{minipage}{0.5\textwidth}
        \begin{footnotesize}
        \centering
        \renewcommand{\tabularxcolumn}[1]{ >{\arraybackslash}b{#1}}
        \begin{tabularx}{\textwidth}{Z p{2cm}}
        Optimal Path Cost: & 9291.353 \\
        Simple N-body Path Cost: & 10723.422 \\
        \textbf{Percent Error:} & \textbf{15.413\%}
        \end{tabularx}
        \end{footnotesize}
    \end{minipage}
\end{figure}

\begin{figure}[H]
    \centering
    \begin{minipage}{0.25\textwidth}
        \centering
        \includegraphics[scale=0.65]{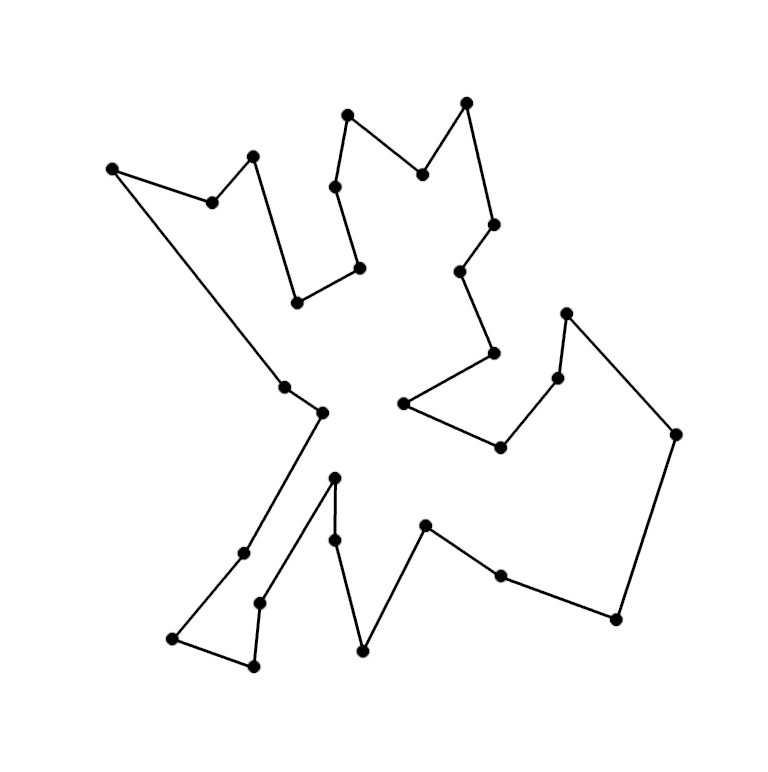}
        \caption{Pressure N-body result for \textit{bay29} instance.}
    \end{minipage}
    \hspace{5mm}
    \begin{minipage}{0.5\textwidth}
        \begin{footnotesize}
        \centering
        \renewcommand{\tabularxcolumn}[1]{ >{\arraybackslash}b{#1}}
        \begin{tabularx}{\textwidth}{Z p{2cm}}
        Optimal Path Cost: & 9291.353 \\
        Pressure N-body Path Cost: & 10055.722 \\
        \textbf{Percent Error:} & \textbf{8.227\%}
        \end{tabularx}
        \end{footnotesize}
    \end{minipage}
\end{figure}

\begin{figure}[H]
    \centering
    \begin{minipage}{0.25\textwidth}
        \centering
        \includegraphics[scale=0.65]{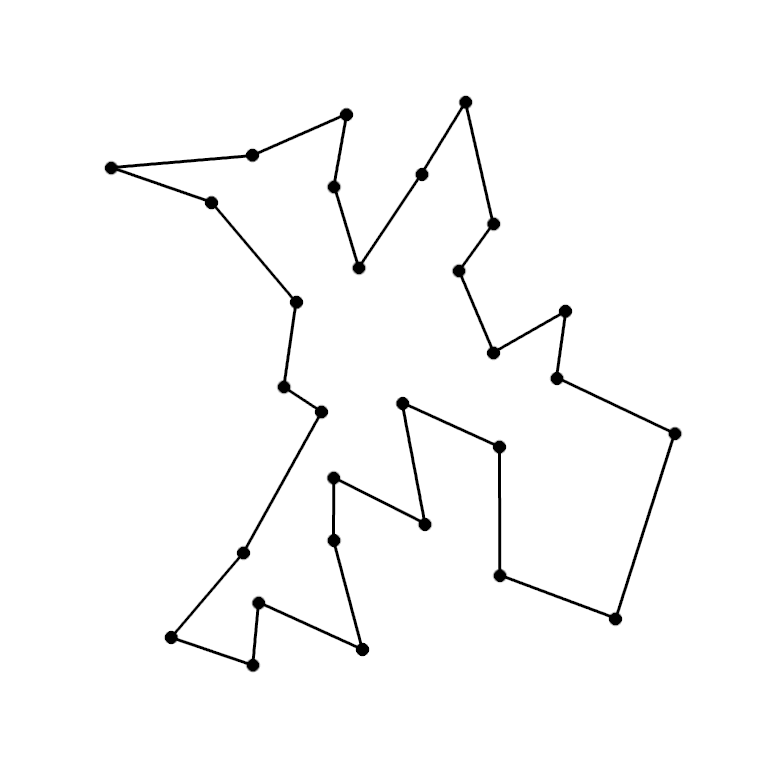}
        \caption{Bubble N-body result for \textit{bay29} instance.}
    \end{minipage}
    \hspace{5mm}
    \begin{minipage}{0.5\textwidth}
        \begin{footnotesize}
        \centering
        \renewcommand{\tabularxcolumn}[1]{ >{\arraybackslash}b{#1}}
        \begin{tabularx}{\textwidth}{Z p{2cm}}
        Optimal Path Cost: & 9291.353 \\
        Bubble N-body Path Cost: & 35725.371 \\
        \textbf{Percent Error:} & \textbf{3.445\%}
        \end{tabularx}
        \end{footnotesize}
    \end{minipage}
\end{figure}

\newpage
\begin{figure}[H]
    \centering
    \begin{minipage}{0.25\textwidth}
        \centering
        \includegraphics[scale=0.65]{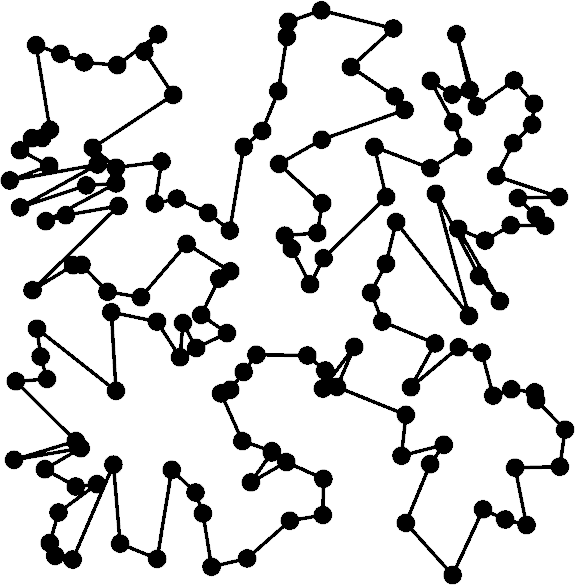}
        \caption{Simple N-body result for \textit{ch150} instance.}
    \end{minipage}
    \hspace{5mm}
    \begin{minipage}{0.5\textwidth}
        \begin{footnotesize}
        \centering
        \renewcommand{\tabularxcolumn}[1]{ >{\arraybackslash}b{#1}}
        \begin{tabularx}{\textwidth}{Z p{2cm}}
        Optimal Path Cost: & 6532.28 \\
        Simple N-body Path Cost: & 8261.32 \\
        \textbf{Percent Error:} & \textbf{26.47\%}
        \end{tabularx}
        \end{footnotesize}
    \end{minipage}
    \label{fig:ch150-simple}
\end{figure}

\begin{figure}[H]
    \centering
    \begin{minipage}{0.25\textwidth}
        \centering
        \includegraphics[scale=0.65]{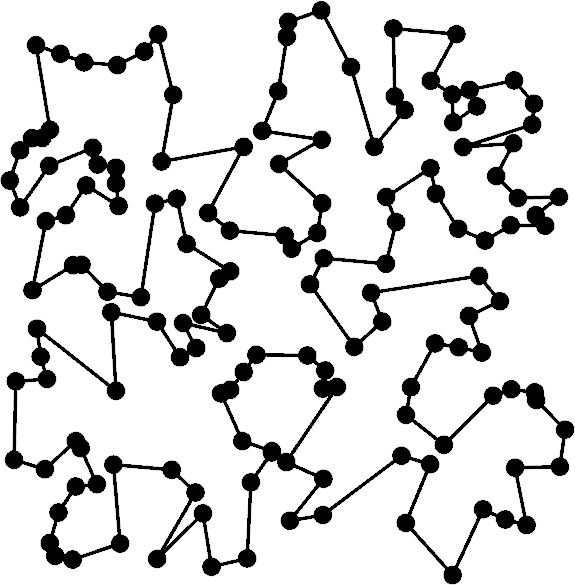}
        \caption{Pressure N-body result for \textit{ch150} instance.}
    \end{minipage}
    \hspace{5mm}
    \begin{minipage}{0.5\textwidth}
        \begin{footnotesize}
        \centering
        \renewcommand{\tabularxcolumn}[1]{ >{\arraybackslash}b{#1}}
        \begin{tabularx}{\textwidth}{Z p{2cm}}
        Optimal Path Cost: & 6532.28 \\
        Pressure N-body Path Cost: & 7553.66 \\
        \textbf{Percent Error:} & \textbf{15.64\%}
        \end{tabularx}
        \end{footnotesize}
    \end{minipage}
    \label{fig:ch150-pressure}
\end{figure}

\begin{figure}[H]
    \centering
    \begin{minipage}{0.25\textwidth}
        \centering
        \includegraphics[scale=0.65]{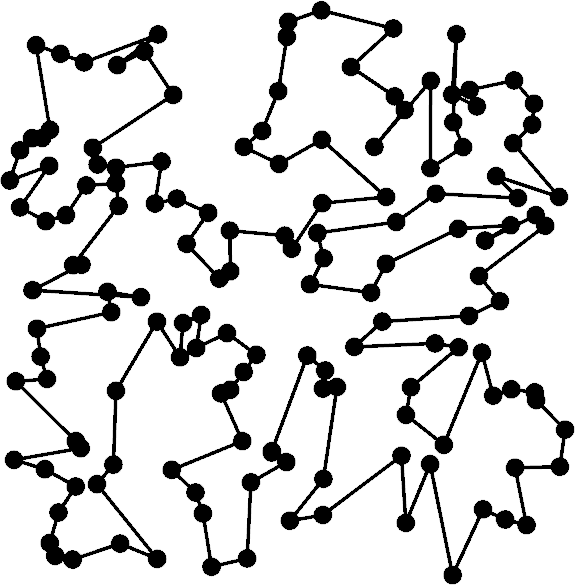}
        \caption{Bubble N-body result for \textit{ch150} instance.}
    \end{minipage}
    \hspace{5mm}
    \begin{minipage}{0.5\textwidth}
        \begin{footnotesize}
        \centering
        \renewcommand{\tabularxcolumn}[1]{ >{\arraybackslash}b{#1}}
        \begin{tabularx}{\textwidth}{Z p{2cm}}
        Optimal Path Cost: & 6532.28 \\
        Bubble N-body Path Cost: & 8270.07 \\
        \textbf{Percent Error:} & \textbf{26.60\%}
        \end{tabularx}
        \end{footnotesize}
    \end{minipage}
    \label{fig:ch150-density}
\end{figure}

\newpage
\section{Reparameterizing Lennard-Jones Force Functions}
Given a Lennard-Jones potential function, the negative of its derivative is the corresponding Lennard-Jones force function (LJF), which can be written in the following form.
\begin{equation}
\label{LJF}
F(r) = \frac{G}{r^q} - \frac{H}{r^p},\ r>0,
\end{equation}

\smallskip

\noindent  where $G$, $H$, $q$, and $p$ are positive real numbers, and $q > p$.   

\bigskip

Equation~\eqref{LJF} expresses the force $F(r)$ acting on two particles based on their distance $r$ from each other, with positive and negative values corresponding to repulsive and attractive forces, respectively.  The parameters $G$, $H$, $q$, and $p$ govern the relative strength of the repulsive and attractive terms in $F$, but unfortunately these parameters are difficult to interpret geometrically.  The goal of this section is to reparameterize Lennard-Jones force functions using more geometrically meaningful parameters.

\bigskip

\begin{figure}[htbp]
    \centering
    \includegraphics{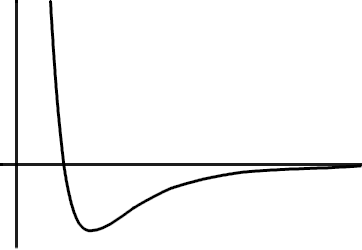}
    \caption{General shape of Lennard-Jones force function.}
    \label{fig:LJFfigure}
\end{figure}{}

Proposition~\ref{curveshape} below shows that every LJF essentially has the same shape, as displayed in Figure \ref{fig:LJFfigure}. Arbitrarily large repulsive forces occur at small distances $r$, followed by a unique root $F(L)=0$ and a unique absolute minimum $F(r_\text{min})=-M$.  As $r$ increases beyond $r_\text{min}$, $F(r)$ asymptotically converges to zero.  In other words, there is an equilibrium distance $L$ at which the force is zero, followed by a point $r_\text{min}$ where the maximal attractive force is attained, after which the attractive force decays to zero.  As a side note, Proposition~\ref{curveshape} also shows that every LJF is convex until reaching a unique inflection point $r_\text{infl}$ and is concave afterwards.

%
%


\begin{proposition}
\label{curveshape}
Let $F$ be a Lennard-Jones force function determined by parameters \\ $G,H,q,p > 0$, where  $q > p$.
\begin{enumerate}
\item \label{limit1}
$\lim_{r \to 0^+} F(r) = \infty$
\item 
$\lim_{r \to \infty} F(r) = 0$
\item \label{limit2}
$F(L)=0$ is uniquely satisfied by 
\[
L = \left(\frac{G}{H}\right)^\frac{1}{q-p}
\]
\item  \label{rmin}
$F$ attains a unique absolute minimum at 
\[
r_\text{min}=\left(\frac{q}{p}\right)^\frac{1}{q-p}L
\]
\item \label{M}
The absolute minimum value is $F(r_\text{min}) = -M$, where
\[
M ={H}{\left[\left(\frac{q}{p}\right)^{\frac{1}{q-p}} L\right]^{-p}}\left[1-\frac{p}{q}\right]
\]

\item
$F$ has a unique inflection point at 
\[
r_\text{infl}=\left(\frac{q+1}{p+1}\right)^\frac{1}{q-p} r_\text{min}
\]
\item
$L < r_\text{min} < r_\text{infl}$.

\end{enumerate}
\end{proposition}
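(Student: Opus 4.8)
The plan is to handle all seven claims through a single unifying observation: after pulling out the fastest-decaying power of $r$, each of $F$, $F'$, and $F''$ becomes a strictly positive factor multiplied by an affine function of $r^{q-p}$. Since $q-p>0$, the map $r\mapsto r^{q-p}$ is a strictly increasing bijection of $(0,\infty)$ onto itself, so each such affine function has exactly one sign change. This simultaneously delivers existence, uniqueness, and the direction of monotonicity for every root and critical point, reducing the proposition to routine algebra.

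First I would dispose of the limits and the root. Writing $F(r)=r^{-q}\bigl(G-Hr^{q-p}\bigr)$, the bracket tends to $G>0$ as $r\to 0^+$ while $r^{-q}\to\infty$, giving part~\ref{limit1}; part~2 is immediate since $Gr^{-q}$ and $Hr^{-p}$ each vanish as $r\to\infty$. For part~\ref{limit2}, the same factorization shows $F(r)=0$ iff $G-Hr^{q-p}=0$, and monotonicity of $r^{q-p}$ forces the unique solution $L=(G/H)^{1/(q-p)}$. Next I would compute $F'(r)=r^{-(q+1)}\bigl(pHr^{q-p}-qG\bigr)$ and argue identically that the critical point is unique, occurring where $r^{q-p}=qG/(pH)$, i.e.\ at $r_\text{min}=(q/p)^{1/(q-p)}L$; the sign change of $F'$ from negative to positive then gives part~\ref{rmin}. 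Part~\ref{M} is a direct substitution: the critical-point relation yields $qGr_\text{min}^{-q}=pHr_\text{min}^{-p}$, hence $Gr_\text{min}^{-q}=(p/q)Hr_\text{min}^{-p}$, and plugging into $F$ gives $F(r_\text{min})=-Hr_\text{min}^{-p}\bigl(1-p/q\bigr)=-M$.

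For the inflection point I would apply the factorization one level deeper to $F''(r)=r^{-(q+2)}\bigl(q(q+1)G-p(p+1)Hr^{q-p}\bigr)$, obtaining the unique zero at $r^{q-p}=q(q+1)G/\bigl(p(p+1)H\bigr)$ and rewriting it as $\bigl((q+1)/(p+1)\bigr)^{1/(q-p)}r_\text{min}$ by comparison with $r_\text{min}^{q-p}=(q/p)(G/H)$; the sign of $F''$ then certifies convexity before and concavity after. Finally, part~7 follows because $q>p>0$ forces both $q/p>1$ and $(q+1)/(p+1)>1$, and raising either to the positive power $1/(q-p)$ preserves the inequality, so $L<r_\text{min}<r_\text{infl}$.

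I do not expect any single step to pose a serious obstacle; the only point requiring genuine care is the \emph{absolute} minimum assertion in part~\ref{rmin}. There I must combine the single sign change of $F'$ with the boundary behavior $F(r)\to\infty$ as $r\to0^+$ and $F(r)\to0$ as $r\to\infty$ to conclude that the unique critical point is the global minimizer, rather than merely verifying a local second-order condition. Everything else reduces to the same ``positive factor times a monotone function of $r^{q-p}$'' pattern together with elementary algebraic substitution.
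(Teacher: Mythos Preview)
Your proposal is correct and follows essentially the same route as the paper: both arguments factor $F$, $F'$, and $F''$ as a strictly positive power of $r$ times an expression that is monotone in $r^{q-p}$, then read off the unique root, critical point, and inflection point together with the sign changes and the ordering $L<r_\text{min}<r_\text{infl}$. The only cosmetic differences are that you pull out $r^{-q}$, $r^{-(q+1)}$, $r^{-(q+2)}$ where the paper pulls out $r^{-p}$, $r^{-(p+1)}$, $r^{-(p+2)}$, and you compute $M$ via the critical-point relation $qGr_\text{min}^{-q}=pHr_\text{min}^{-p}$ rather than via the substitution $G=HL^{q-p}$; both are equally valid.
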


\smallskip

\begin{proof}
Properties~\eqref{limit1} through \eqref{limit2} follow from the factorization
\begin{equation}\label{factored}
F(r) = \frac{1}{r^p}\left(\frac{G}{r^{q-p}} - H\right).
\end{equation}

\noindent  The first and second derivatives of $F$ are
\begin{align}
F'(r) & = -q\frac{G}{r^{q+1}} + p \frac{H}{r^{p+1}} \\
& = \frac{1}{r^{p+1}}\left(-q\frac{G}{r^{q-p}} + pH \right)  \label{firstderiv} \\
F''(r) & = q(q+1)\frac{G}{r^{q+2}} - p(p+1) \frac{H}{r^{p+2}} \\
& = \frac{1}{r^{p+2}}\left(q(q+1)\frac{G}{r^{q-p}} - p(p+1) H \right) \label{secondderiv}.
\end{align}

\noindent  Equation~\eqref{firstderiv} shows that $F$ is strictly decreasing on the interval $(0,r_\text{min})$ and strictly increasing on $(r_\text{min},\infty)$, establishing the existence of a unique minimum at 

\[
r_\text{min}=\left(\frac{qG}{pH}\right)^\frac{1}{q-p}=\left(\frac{q}{p}\right)^\frac{1}{q-p}L > L.
\]

\smallskip

\noindent  Similarly, equation~\eqref{secondderiv} shows that $F$ is convex on $(0,r_\text{infl})$ and concave on $(r_\text{infl},\infty)$, with a unique inflection point at 

\[
r_\text{infl}=\left(\frac{q(q+1)G}{p(p+1)H}\right)^\frac{1}{q-p} = \left(\frac{q+1}{p+1}\right)^\frac{1}{q-p} r_\text{min} > r_\text{min}.
\]

\smallskip

All that remains is showing $F(r_\text{min}) = -M$.  This is easily verified by using $G=HL^{q-p}$ to rewrite Equation~\eqref{factored} as follows:

\begin{equation}
\label{newform}
F(r) = \frac{H}{r^p}\left[\left(\frac{L}{r}\right)^{q-p} - 1\right].
\end{equation}
\end{proof}


We now have three geometrically meaningful parameters $L$, $r_\text{min}$, and $M$.  A fourth parameter of interest, $\delta = q-p$, will later be shown to control the decay rate of the attractive force.  Proposition~\ref{curveshape} shows that the parameter vector $(G,H,q,p)$ determines the values of the parameter vector $(L, r_\text{min}, M, \delta)$.  Conversely, the following proposition shows that $(L, r_\text{min}, M, \delta)$ determines $(G,H,q,p)$.

\begin{proposition}
\label{converse}
Assume $L,r_\text{min},M,\delta > 0$, where $L < r_\text{min}$.  Then there exist unique parameters $G,H,q,p > 0$, where $q>p$, such that the corresponding Lennard-Jones force function $F$ satisfies the following.

\begin{enumerate}
\item
$F(L)=0$
\item
$F$ attains an absolute minimum at $F(r_\text{min}) = -M$
\item
$\delta = q-p$.
\end{enumerate}
\end{proposition}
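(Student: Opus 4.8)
The plan is to invert the four formulas from Proposition~\ref{curveshape} that express $(L, r_\text{min}, M, \delta)$ in terms of $(G, H, q, p)$, solving uniquely for the latter. Existence will follow by construction, and uniqueness will follow because every step in the inversion is forced.

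First I would recover $q$ and $p$. Item~\ref{rmin} of Proposition~\ref{curveshape} gives $r_\text{min}/L = (q/p)^{1/\delta}$, so setting $\rho = r_\text{min}/L$ yields $q/p = \rho^\delta$. Combined with the constraint $q - p = \delta$, this is a pair of equations in $q, p$ that becomes linear once $q/p$ is known: substituting $q = p\rho^\delta$ into $q - p = \delta$ gives
\begin{equation*}
p = \frac{\delta}{\rho^\delta - 1}, \qquad q = \frac{\delta\, \rho^\delta}{\rho^\delta - 1}.
\end{equation*}
The hypothesis $L < r_\text{min}$ gives $\rho > 1$, so $\rho^\delta > 1$ and both $p$ and $q$ are positive; moreover $q - p = \delta > 0$ forces $q > p$, as required.

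Next I would recover $H$ and $G$. With $q, p, r_\text{min}$ now fixed, item~\ref{M} (rewritten using $r_\text{min} = (q/p)^{1/\delta} L$) reads $M = H\, r_\text{min}^{-p}(1 - p/q)$, which I would solve for
\begin{equation*}
H = \frac{M\, r_\text{min}^{p}\, q}{q - p} = \frac{M\, r_\text{min}^{p}\, q}{\delta} > 0,
\end{equation*}
and then item~\ref{limit2}, in the form $G = H L^\delta$, gives $G > 0$. This produces a candidate parameter vector $(G, H, q, p)$ with all entries positive and $q > p$.

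Finally I would confirm both directions. For existence, feeding $(G, H, q, p)$ back through Proposition~\ref{curveshape} reproduces exactly $L$, $r_\text{min}$, $M$, and $\delta$ by construction, and that proposition already guarantees the absolute minimum is unique and attained at $r_\text{min}$ with value $-M$; so all three required conditions hold. For uniqueness, any admissible $(G, H, q, p)$ must satisfy the formulas of Proposition~\ref{curveshape}, and the inversion above shows those formulas pin down $(G, H, q, p)$ uniquely from $(L, r_\text{min}, M, \delta)$. I expect the only real obstacle to be the coupled system for $q$ and $p$: the trick of writing $q/p = \rho^\delta$ to linearize it is the key step, after which everything reduces to routine algebra and sign-checking driven by the single inequality $L < r_\text{min}$.
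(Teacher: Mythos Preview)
Your proposal is correct and follows essentially the same route as the paper: invert the formulas of Proposition~\ref{curveshape} by first solving the coupled system $q/p=(r_\text{min}/L)^\delta$, $q-p=\delta$ to obtain $p=\delta/((r_\text{min}/L)^\delta-1)$ and $q=p+\delta$, then back-substitute to get $H$ and $G$, checking positivity via $L<r_\text{min}$ and verifying the construction through Proposition~\ref{curveshape}. Your expression $H=Mr_\text{min}^p q/\delta$ is algebraically equivalent to the paper's form $H=M[(q/p)^{1/(q-p)}L]^p[1-p/q]^{-1}$, since $(q/p)^{1/(q-p)}L=r_\text{min}$.
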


\begin{proof}
The following equations sequentially define $p$, $q$, $H$, and $G$ in terms of $L$, $r_\text{min}$, $M$, and $\delta$.
\begin{align}
p & = \frac{\delta}{\left(\frac{r_\text{min}}{L}\right)^\delta - 1} \label{peq} \\
q & = p + \delta   \label{qeq} \\
H & = M\left[\left(\frac{q}{p}\right)^{\frac{1}{q-p}} L\right]^{p}\left[1-\frac{p}{q}\right]^{-1} \label{Heq} \\
G & = HL^{q-p} \label{Geq}
\end{align}

\noindent  These equations are derived by algebraically manipulating the equations from parts \eqref{limit2} through \eqref{M} of Proposition~\ref{curveshape}.\footnote{This observation establishes uniqueness of $(G,H,q,p)$.}  In practice, it is necessary to apply these equations in the order listed.  For instance, $p$ must be computed using \eqref{peq} before it can be used in \eqref{qeq} to compute $q$.  Similarly, $p$ and $q$ must both be computed before they can be used in $\eqref{Heq}$ to compute $H$.

\bigskip

Because $L < r_\text{min}$, Equation~\eqref{peq} defines a positive value for  $p$.   It is then easy to verify that $q, H, G > 0$ and $q > p$.  Therefore, Equations~\eqref{peq} through \eqref{Geq} define a parameter vector $(G,H,q,p)$ corresponding to a Lennard-Jones force function $F$.  Applying part \eqref{limit2} of Proposition~\ref{curveshape} to $F$ implies that it has a unique zero at

\[
\left(\frac{G}{H}\right)^\frac{1}{q-p} = \left(\frac{HL^{q-p}}{H}\right)^\frac{1}{q-p} = L.
\]

\smallskip

\noindent  Similarly, parts \eqref{rmin} and \eqref{M} of Proposition~\ref{curveshape} prove that $F(r_\text{min}) = -M$ is the absolute minimum.
\end{proof}

Propositions~\ref{curveshape} and \ref{converse} establish a one-to-one correspondence between the parameter sets 

\[
\{(G,H,q,p) \mid G,H,q,p > 0\text{ and }q > p\}\text{, and }
\]

\[
\{(L, r_\text{min}, M, \delta) \mid L,r_\text{min},M,\delta > 0\text{ and }L < r_\text{min}\},
\]

\noindent  reparameterizing Lennard-Jones force functions in terms of more geometrically meaningful parameters.  All that remains is to investigate the role of the shape parameter $\delta$, which requires the following lemma.

\begin{lemma}
\label{partiallemma}
If $F$ is a Lennard-Jones force function with parameters $(L,r_\text{min},M,\delta)$, and $r > r_\text{min}$, then

\[
\frac{\partial}{\partial \delta} F(r) < 0.
\]
\end{lemma}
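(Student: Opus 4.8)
The plan is to reduce the claim to a single clean inequality by first rewriting $F$ in the new parameters and then exploiting that $F(r)<0$ for $r>r_\text{min}$. Using Equation~\eqref{newform} together with the identities behind Proposition~\ref{converse} (namely $r_\text{min}=(q/p)^{1/\delta}L$ and $1-p/q=\delta/q$), I would first derive the normalized form
\[
F(r)=\frac{M}{\delta}\,z^{-p}\bigl(p\,z^{-\delta}-q\bigr)
=M\,z^{-p}\!\left[\frac{z^{-\delta}-1}{\rho^{\delta}-1}-1\right],
\qquad z=\frac{r}{r_\text{min}},\quad \rho=\frac{r_\text{min}}{L},
\]
in which the dependence on $\delta$ (through $p$, $q$, and the exponents) is completely explicit while every other symbol is held fixed. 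This form immediately recovers the checks $F(r_\text{min})=-M$ and $\lim_{r\to\infty}F(r)=0$, and for $r>r_\text{min}$ (i.e. $z>1$) it shows $F(r)<0$. Consequently $\frac{\partial}{\partial\delta}F(r)<0$ is equivalent to $\frac{\partial}{\partial\delta}\ln\bigl(-F(r)\bigr)>0$, which is the inequality I would actually prove.

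Next I would differentiate $\ln(-F)=\ln M-p\ln z+\ln(u-m)-\ln(u-1)$, abbreviating $u=\rho^{\delta}>1$ and $m=z^{-\delta}\in(0,1)$. Writing $P=u\ln u-(u-1)$ and using $\delta\ln\rho=\ln u$, $\delta\ln z=-\ln m$, and $p=\delta/(u-1)$, the derivative should collapse to
\[
\frac{\partial}{\partial\delta}\ln(-F)=\frac{\Theta}{\delta\,(u-1)^2(u-m)},\qquad
\Theta=-P\,(u-m)\ln m+(u-1)\bigl[u\ln u\,(m-1)-m\ln m\,(u-1)\bigr].
\]
Since the prefactor $\delta(u-1)^2(u-m)$ is positive, the whole problem reduces to showing $\Theta>0$ for $m\in(0,1)$ with $u>1$ fixed.

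The crux is the positivity of $\Theta$. A term-by-term sign check fails, because $\Theta$ is a difference of competing positive and negative contributions; moreover a direct computation shows that both $\Theta$ and $\partial\Theta/\partial m$ vanish at the endpoint $m=1$ (which corresponds to $r=r_\text{min}$). This double vanishing is the main obstacle, and it forces a second-order argument. The plan is to treat $\Theta$ as a function of $m$ on $(0,1]$ and compute
\[
\frac{\partial^2\Theta}{\partial m^2}=\frac1m\!\left[P+\frac{Pu}{m}-(u-1)^2\right]
>\frac1m\Bigl[P(1+u)-(u-1)^2\Bigr]=\frac{u}{m}\Bigl[(u+1)\ln u-2(u-1)\Bigr],
\]
where the inequality uses $m<1$ together with $P>0$ (valid since $u>1$). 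Thus convexity of $\Theta$ in $m$ reduces to the elementary logarithmic (Pad\'e) inequality $(u+1)\ln u\ge 2(u-1)$ for $u\ge1$, which I would settle in one line by noting that $\phi(u)=\ln u-\frac{2(u-1)}{u+1}$ satisfies $\phi(1)=0$ and $\phi'(u)=\frac{(u-1)^2}{u(u+1)^2}\ge0$.

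With $\partial^2\Theta/\partial m^2>0$ on $(0,1)$, the function $\partial\Theta/\partial m$ is strictly increasing, so the condition $\partial\Theta/\partial m=0$ at $m=1$ forces $\partial\Theta/\partial m<0$ on $(0,1)$; hence $\Theta$ is strictly decreasing there, and $\Theta(1)=0$ gives $\Theta>0$ throughout $(0,1)$. This yields $\frac{\partial}{\partial\delta}\ln(-F)>0$ and therefore $\frac{\partial}{\partial\delta}F(r)<0$, as claimed. The only genuinely delicate point is the degenerate endpoint behavior at $r=r_\text{min}$; everything else is bookkeeping in the explicit form of $F$ plus the standard inequality above.
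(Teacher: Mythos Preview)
Your argument is correct, and the crux is handled differently from the paper. Both proofs begin identically: rewrite $F$ in the new parameters, pass to $\partial_\delta \ln(-F)$, and isolate a numerator whose sign must be determined (your $\Theta(u,m)$ is, up to a positive factor, the paper's numerator after its substitutions $a=\ln u$, $b=-\ln m$). Where they diverge is the endgame. The paper rewrites the numerator as $(e^{a+b}-1)ab-(e^a-1)(e^b-1)(a+b)$ and then expands everything in exponential Maclaurin series together with the Binomial Theorem, reindexing an absolutely convergent double sum to exhibit a manifestly positive series. You instead exploit the double vanishing at the boundary $m=1$: computing $\partial_m^2\Theta$ and bounding it below by $\tfrac{u}{m}\bigl[(u+1)\ln u-2(u-1)\bigr]$, which is positive by the Pad\'e--type inequality $\ln u\ge \tfrac{2(u-1)}{u+1}$, then rolling back via monotonicity. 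Your route is more elementary---pure calculus plus one standard logarithmic inequality, with no series manipulations---and makes transparent why the endpoint $r=r_\text{min}$ is degenerate to second order; the paper's series argument, on the other hand, gives an explicit nonnegative term-by-term decomposition, which is a stronger structural statement even if it requires more symbolic work.
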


\smallskip

\begin{proof}
First, we rewrite Equation~\eqref{newform} using the $(L,r_\text{min},M,\delta)$-parameterization

\begin{align}
F(r) & = \frac{M\left[\left(\frac{q}{p}\right)^{\frac{1}{q-p}} L\right]^{p}\left[1-\frac{p}{q}\right]^{-1}}{r^p}\left[\left(\frac{L}{r}\right)^{q-p} - 1\right] \\
\label{newparamform}  F(r) & = {M\left(\frac{r_\text{min}}{r}\right)^\frac{\delta}{\left(\tfrac{r_\text{min}}{L}\right)^\delta - 1}}\frac{\left[\left(\frac{L}{r}\right)^{\delta} - 1\right]}{\left[1-\left(\frac{L}{r_\text{min}}\right)^\delta\right]}.
\end{align}

\smallskip

\noindent  The parameters $L$ and $M$ are simply horizontal and vertical scale parameters, so without loss of generality, assume $L = M = 1$.  Because $\log$ is an increasing function, $\frac{\partial}{\partial \delta} F(r) < 0$ if and only if $\frac{\partial}{\partial \delta} \log[-F(r)] > 0$.

\[
-F(r) = \left(\frac{r_\text{min}}{r}\right)^{\frac{\delta}{r_\text{min}^\delta - 1}}\frac{\left[1-\left(\frac{1}{r}\right)^{\delta}\right]}{\left[1-\left(\frac{1}{r_\text{min}}\right)^\delta\right]}
\]
\begin{align*}
\log[-F(r)] &  = \frac{\delta}{r_\text{min}^\delta - 1}\log\left(\frac{r_\text{min}}{r}\right)+\log\left[1-\left(\frac{1}{r}\right)^{\delta}\right]-\log \left[ 1-\left(\frac{1}{r_\text{min}}\right)^\delta\right] \\
\frac{\partial}{\partial \delta} \log[-F(r)] & = \frac{r_\text{min}^\delta - 1 -\delta r_\text{min}^\delta \log r_\text{min}}{(r_\text{min}^\delta - 1)^2}\log\left(\frac{r_\text{min}}{r}\right)+\frac{\log r}{r^\delta-1}-\frac{\log r_\text{min}}{r_\text{min}^\delta-1}. \\
\end{align*}

\noindent  After considerable algebraic simplification, we have

\[
\frac{\partial}{\partial \delta} \log[-F(r)] = \frac{\delta r_\text{min}^\delta (r^\delta -1)(\log r_\text{min})(\log r - \log r_\text{min})-(r^\delta - r_\text{min}^\delta)
(r_\text{min}^\delta -1)\log r}{(r_\text{min}^\delta-1)^2 (r^\delta -1)}.
\]

\noindent  Keeping in mind that $r > r_\text{min} > L =1$, and $\delta > 0$, this denominator is positive, so it suffices to show the numerator is positive.  Making the substitution $x=r_\text{min} > 1$, and $y=r/r_\text{min} > 1$, the numerator is
\begin{align*}
 & \delta x^\delta(x^\delta y^\delta -1)(\log x)(\log y) - (x^\delta y^\delta - x^\delta)(x^\delta -1) \log xy \\
= & \delta^{-1} x^\delta [(x^\delta y^\delta -1)(\log x^\delta)(\log y^\delta) - (x^\delta -1)(y^\delta -1)\log x^\delta y^\delta].
\end{align*}

\noindent  Making a second substitution $a = \log x^\delta > 0$ and $b=\log y^\delta >0$, and noting that $\delta^{-1} x^\delta >0$, it suffices to show that the following expression is positive:

\[
(e^{a+b} -1)ab -(e^a -1)(e^b-1)(a+b).
\]

\noindent  Rewriting this expression using the exponential function's Maclaurin series, whose radius of convergence is infinite, and factoring out $(a+b)ab$ yields

\[
(a+b)ab \left(\sum_{n=1}^\infty \frac{(a+b)^{n-1}}{n!} - \sum_{i=1}^\infty \frac{a^{i-1}}{i!} \sum_{j=1}^\infty \frac{b^{j-1}}{j!}  \right)
\]

\[
=(a+b)ab\left(\sum_{n=1}^\infty \sum_{m=0}^{n-1}  \frac{(n-1)!}{n!m!(n-m-1)!}a^{m}b^{n-m-1} - \sum_{i=1}^\infty \sum_{j=1}^\infty  \frac{a^{i-1}}{i!}  \frac{b^{j-1}}{j!}  \right),
\]

\noindent  by the Binomial Theorem.  The second series is absolutely convergent, so we can rewrite the order of summation to obtain

\[
(a+b)ab\left(\sum_{n=1}^\infty \sum_{m=0}^{n-1}  \frac{1}{nm!(n-m-1)!}a^{m}b^{n-m-1} - \sum_{n=1}^\infty \sum_{m=0}^{n-1}  \frac{a^m}{(m+1)!}  \frac{b^{n-m-1}}{(n-m)!}  \right)
\]
\[
=(a+b)ab\sum_{n=1}^\infty \sum_{m=0}^{n-1} \frac{m(n-m-1)}{n(m+1)!(n-m)!}a^{m}b^{n-m-1} > 0.
\]

\end{proof}

We are now prepared to discuss the impact of $\delta$ on the decay of the attractive force.  After an LJF attains its minimum $F(r_\text{min}) = -M$, it monotonically converges to $0$.  Given $\varepsilon \in (0,1)$, the Intermediate Value Theorem guarantees the existence of  some unique $r_\varepsilon > r_\text{min}$, such that $F(r_\varepsilon) = -\varepsilon M$.  For example, if $\varepsilon = 0.01$, then $r_\varepsilon$ is the point where the attractive force has decayed to 1\% of its maximum magnitude.  Choosing the value of $r_\varepsilon$ provides control over the decay rate of the attractive force.  For example, choosing $r_{0.9}$ to be much greater than $r_\text{min}$ means that the attractive force is still at 90\% of its maximum magnitude for values of $r$ much larger than $r_\text{min}$, i.e., the attractive force is decaying very slowly.  On the other hand, choosing $r_{0.01}$ to be very close to $r_\text{min}$ implies a very rapid decay of the attractive force.  The following theorem illustrates the importance of the parameter $\delta$ in determining $r_\varepsilon$.

\begin{theorem}
Consider a Lennard-Jones force function $F$ with fixed parameters $L$, $r_\text{min}$, and $M$, assume $0 < \varepsilon < 1$, and define the function

\[
T(r) = \left(\frac{r_\text{min}}{r}\right)^\frac{1}{\log\left(\frac{r_\text{min}}{L}\right)}\frac{\log\left(\frac{r}{L}\right)}{\log\left(\frac{r_\text{min}}{L}\right)}.
\]

\begin{enumerate}
\item  \label{Rexist}
There exists $R_\varepsilon > r_\text{min}$, such that $T(R_\varepsilon) = \varepsilon$.
\item \label{possiblereps}
The set of possible values of $r_\varepsilon$ is $(R_\varepsilon,\infty)$.
\item \label{delta1to1A}
There is a one-to-one correspondence between the values of $\delta$ and $r_\varepsilon$.
\item
$\lim_{\delta \to 0^+} r_\varepsilon = R_\varepsilon$
\item \label{delta1to1B}
$\lim_{\delta \to \infty} r_\varepsilon = \infty$.
\end{enumerate}
\end{theorem}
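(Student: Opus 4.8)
The plan is to reduce the entire theorem to the study of a single family of functions of $r$ indexed by $\delta$. For $r > r_\text{min}$, set $g_\delta(r) = -F(r)/M$, so that the defining condition $F(r_\varepsilon) = -\varepsilon M$ becomes simply $g_\delta(r_\varepsilon) = \varepsilon$. Working from the $(L,r_\text{min},M,\delta)$-parameterization in Equation~\eqref{newparamform}, I would first record two monotonicity facts. By Proposition~\ref{curveshape}, $F$ is strictly increasing on $(r_\text{min},\infty)$ from $-M$ up to $0$, so $g_\delta$ is a strictly decreasing bijection from $(r_\text{min},\infty)$ onto $(0,1)$; in particular $r_\varepsilon = r_\varepsilon(\delta)$ is uniquely determined for each $\delta>0$. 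By Lemma~\ref{partiallemma}, $\tfrac{\partial}{\partial\delta}F(r)<0$ for $r>r_\text{min}$, so $g_\delta(r)$ is strictly increasing in $\delta$ for each fixed $r$.

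The crucial observation is that $T$ is exactly the $\delta\to 0^+$ limit of $g_\delta$. Starting from Equation~\eqref{newparamform}, I would compute $\lim_{\delta\to 0^+} g_\delta(r)$ using the expansions $(r_\text{min}/L)^\delta = 1 + \delta\log(r_\text{min}/L) + o(\delta)$ and likewise for the other powers, so that the exponent $\delta/[(r_\text{min}/L)^\delta - 1] \to 1/\log(r_\text{min}/L)$ and the ratio $[1-(L/r)^\delta]/[1-(L/r_\text{min})^\delta] \to \log(r/L)/\log(r_\text{min}/L)$. This yields $\lim_{\delta\to 0^+} g_\delta(r) = T(r)$. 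Because $g_\delta(r)$ is increasing in $\delta$, this limit is also an infimum, so $g_\delta(r) > T(r)$ for every $\delta>0$.

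With these tools the parts fall out in order. For Part~\eqref{Rexist}, I would show $T$ is strictly decreasing on $(r_\text{min},\infty)$ by setting $c=\log(r_\text{min}/L)>0$ and $u=\log(r/L)$ and checking that $\tfrac{d}{du}\log T = -1/c + 1/u < 0$ for $u>c$; together with $T(r_\text{min})=1$ and $T(r)\to 0$ as $r\to\infty$, the Intermediate Value Theorem gives a unique $R_\varepsilon > r_\text{min}$ with $T(R_\varepsilon)=\varepsilon$. For Part~\eqref{delta1to1A}, the implicit relation $g_\delta(r_\varepsilon)=\varepsilon$ together with $\tfrac{\partial}{\partial r}g_\delta<0$ and $\tfrac{\partial}{\partial\delta}g_\delta>0$ shows that $r_\varepsilon(\delta)$ is continuous and strictly increasing, giving the one-to-one correspondence. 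For the fourth claim, evaluating $\varepsilon = g_\delta(r_\varepsilon) > T(r_\varepsilon)$ and using that $T$ is decreasing forces $r_\varepsilon(\delta) > R_\varepsilon$ for all $\delta$; then a short squeeze argument finishes it, since if the infimum $r_0 = \lim_{\delta\to 0^+} r_\varepsilon$ exceeded $R_\varepsilon$, choosing $r^\ast \in (R_\varepsilon, r_0)$ would give $g_\delta(r^\ast) > \varepsilon$ for every $\delta$ while $g_\delta(r^\ast) \to T(r^\ast) < \varepsilon$, a contradiction, so $r_\varepsilon \downarrow R_\varepsilon$. For Part~\eqref{delta1to1B}, I would compute $\lim_{\delta\to\infty} g_\delta(r) = 1$ for each fixed $r$ (here the exponent $\delta/[(r_\text{min}/L)^\delta-1]\to 0$ and both $(L/r)^\delta, (L/r_\text{min})^\delta \to 0$); since $1>\varepsilon$, eventually $g_\delta(r)>\varepsilon$, hence $r_\varepsilon(\delta) > r$, forcing $r_\varepsilon\to\infty$. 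Finally, Part~\eqref{possiblereps} is a consequence of the last three: $r_\varepsilon(\delta)$ is continuous and strictly increasing with (non-attained) infimum $R_\varepsilon$ and supremum $\infty$, so its range is exactly $(R_\varepsilon,\infty)$.

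The main obstacle I anticipate is the careful justification of the two $\delta$-limits, which are indeterminate forms in the exponent (and in the ratio) and require genuine asymptotic control rather than naive termwise limits; relatedly, there is the step of upgrading the pointwise convergence $g_\delta \to T$ to the convergence $r_\varepsilon \to R_\varepsilon$. The monotonicity of $g_\delta$ in $\delta$ supplied by Lemma~\ref{partiallemma} is precisely what makes the squeeze argument work and lets me avoid appealing to a uniform-convergence result such as Dini's theorem.
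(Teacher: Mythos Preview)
Your proposal is correct and follows essentially the same approach as the paper: both rely on the monotonicity of $F(r)$ in $\delta$ from Lemma~\ref{partiallemma}, the two limits $\lim_{\delta\to 0^+}(-F(r)/M)=T(r)$ and $\lim_{\delta\to\infty}(-F(r)/M)=1$ (the paper invokes L'Hopital where you use first-order expansions), and the monotonicity of $T$ on $[r_\text{min},\infty)$ together with the Intermediate Value Theorem. The only organizational difference is that the paper establishes Part~\eqref{possiblereps} directly---for fixed $r$, the range of $\delta\mapsto -F(r)/M$ is $(T(r),1)$, so $r$ is a possible $r_\varepsilon$ iff $T(r)<\varepsilon$---whereas you recover it from Parts~\eqref{delta1to1A}--\eqref{delta1to1B}; your squeeze argument for the limit $r_\varepsilon\to R_\varepsilon$ is a welcome elaboration of what the paper leaves implicit.
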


\begin{proof}  Because $L < r_\text{min}$, it is routine to verify that $T$ is decreasing on $[r_\text{min},\infty)$, with $T(r_\text{min}) = 1$ and $\lim_{r \to \infty} T(r) = 0$.  Property~\eqref{Rexist} then follows from the Intermediate Value Theorem.  Given $r > r_\text{min}$, applying L'Hopital's rule to Equation~\eqref{newparamform} provides values for the following limits:

\begin{equation}\label{deltalimit1}
\lim_{\delta \to \infty} F(r) = -M
\end{equation}
\begin{equation}\label{deltalimit2}
\lim_{\delta \to 0^+} F(r) = -M\left(\frac{r_\text{min}}{r}\right)^\frac{1}{\log\left(\frac{r_\text{min}}{L}\right)}\frac{\log\left(\frac{r}{L}\right)}{\log\left(\frac{r_\text{min}}{L}\right)}.
\end{equation}

\smallskip

\noindent  Furthermore, by Lemma~\ref{partiallemma},   $F(r)$ is a monotonically decreasing function of $\delta$.  Therefore, there exists $\delta > 0$, such that $F(r) = - \varepsilon M$ if and only if 

\[
-M < -\varepsilon M < -M \left(\frac{r_\text{min}}{r}\right)^\frac{1}{\log\left(\frac{r_\text{min}}{L}\right)}\frac{\log\left(\frac{r}{L}\right)}{\log\left(\frac{r_\text{min}}{L}\right)},
\]

\smallskip

\noindent  which holds if and only if $T(r) < \varepsilon$.  This inequality is equivalent to $r > R_\varepsilon$, since $T$ is decreasing on $[r_\text{min},\infty)$, which establishes Property~\eqref{possiblereps}.  Properties~\eqref{delta1to1A} through \eqref{delta1to1B} now follow from Lemma~\ref{partiallemma} and the limits \eqref{deltalimit1} and \eqref{deltalimit2}.
\end{proof}

In summary, this section has shown how to reparameterize Lennard-Jones force functions in terms of more geometrically meaningful parameters.  Given desired values $r_\text{min} > L > 0$ and $M > 0$, a Lennard-Jones force function can always be specified with equilibrium length $L$ and maximal attractive force $F(r_\text{min}) = -M$.  Choosing $r_\text{min}$ near $L$ results in an attractive force that ramps up quickly, while choosing $r_\text{min}$ much larger than $L$ corresponds to a slow increase of the attractive force's magnitude.  The decay of the attractive force is then specified as follows:

\begin{enumerate}
\item
Choose some $\varepsilon \in (0,1)$.
\item
Compute $R_\varepsilon$ by solving $T(R_\varepsilon) = \varepsilon$ using a numerical method, such as Newton's method.
\item
Choose a desired value $r_\varepsilon \in (R_\varepsilon,\infty)$.
\item
Use a numerical method and Equation~\eqref{newparamform} to solve $F(r_\varepsilon) = -\varepsilon M$ for $\delta$.
\item
Use Equations~\eqref{peq} through \eqref{Geq} to compute $p$, $q$, $H$, and $G$, which fully specify the Lennard-Jones force function.
\end{enumerate}

\noindent  Only one value of $r_\varepsilon$ can be specified, e.g., it is not possible to simultaneously choose arbitrary values for $r_{0.2}$ and $r_{0.01}$, but overall, this approach still provides a great deal of flexibility.  Hopefully this reparameterization allows researchers engaged in particle modeling to intuitively choose realistic parameter values for a variety of applications.

\end{document}